\newtheorem{prop}{Proposition}[section]
\newtheorem{esempio}[prop]{Examples}
\newtheorem{thm}[prop]{Theorem}
\newtheorem{lemma}[prop]{Lemma}
\newtheorem{corollario}[prop]{Corollary}
\newtheorem{defini}[prop]{Definition}
\newtheorem{osserva}[prop]{Remark}
\newtheorem*{mainresult}{Main Theorem}
\theoremstyle{remark}
\newtheorem*{fact}{Fact}
\newcommand{\ie}{i.e\mbox{.}\xspace}
\newcommand{\rddots}[1]{\cdot^{\cdot^{\cdot^{#1}}}}
\def\be{\begin{equation}}
\def\ee{\end{equation}}
\newcommand*{\set}[1]{\{#1\}}
\newcommand*{\abs}[1]{\lvert#1\rvert}
\newcommand{\C}{\mathbb C}
\newcommand{\Z}{\mathbb Z}
\newcommand{\N}{\mathbb N}
\newcommand{\R}{\mathbb R}
\newcommand{\Q}{\mathbb Q}
\DeclareMathOperator{\td}{t.d.}
\newcommand{\xt}{\tilde x}
\newcommand{\at}{\tilde a}
\newcommand{\x}{\bar x}
\newcommand{\qv}{\bar q}
\newcommand{\kv}{\bar k}
\title[]{Generic solutions of equations with iterated exponentials}
\author{P. D'Aquino}
\address{Dipartimento di Matematica e Fisica, Seconda Universit\`a di Napoli, Viale Lincoln 5, 81100 Caserta, Italy}
\email{paola.daquino@unina2.it}
\author{A. Fornasiero}
\address{Dipartimento di Matematica e Informatica, Universit\`{a} di Parma, Parco Area delle Scienze, 53/A, 43124 Parma, Italy}
\email{antongiulio.fornasiero@gmail.com}
\author{G. Terzo}
\address{Dipartimento di Matematica e Fisica, Seconda Universit\`a di Napoli, Viale Lincoln 5, 81100 Caserta, Italy}
\email{giuseppina.terzo@unina2.it}
\date{\today}
\subjclass[2000]{ Primary:  03C60;
Secondary: 12L12, 11D61, 11U09.} \keywords{Exponential polynomials, generic solution, Schanuel's Conjecture}
\begin{document}


\maketitle

\begin{abstract}
\noindent We study solutions of exponential polynomials over the complex field. Assuming Schanuel's Conjecture we prove that certain polynomials of the form
\[ p(z, e^z, e^{e^z}, e^{e^{e^{z}}}) = 0, \] 
have generic solutions in $\Bbb C$. 
\end{abstract}

\section{Introduction}

We consider analytic functions over $\C$ of the following form
\begin{equation}
\label{function}
f(z) = p(z, e^z, e^{e^z}, \ldots, e^{e^{e^{\rddots{e^{z}}}}})
\end{equation}
where $p(x, y_1, \ldots, y_k) \in \mathbb C[x, y_1, \ldots, y_k]$, and we investigate the existence of a solution $a$ which is generic, \ie such that
\[
\td_{\Q}(a, e^a, e^{e^a}, \ldots, e^{e^{e^{\rddots{e^{a}}}}}) = k,
\]
 where $k$ is the number of iterations of exponentation which appear in the polynomial $p.$\\

\noindent {\bf Conjecture. }
{\it Let $p(x, y_1, \ldots, y_k)$ be a nonzero irreducible polynomial in
$\C[x, y_1, \ldots, y_k]$, depending on $x$ and the last variable $y_k$. 
Then
$$p(z, e^z, e^{e^z}, \ldots, e^{e^{e^{\ldots^{e^{z}}}}})= 0$$ has a generic solution in $\Bbb C.$}

\medskip

A result of Katzberg (see \cite{katz}) implies that \eqref{function} has always infinitely many zeros unless the polynomial is of a certain form, see Section \ref{casocomplesso}. 
Hence, the main problem is to prove the existence of a solution which is generic.
In this context a fundamental role is often played by a conjecture in transcendental number theory due to Schanuel which concerns the exponential function.

\smallskip

\noindent {\bf Schanuel's Conjecture (SC):} 
Let $\lambda_1, \ldots, \lambda_n \in \C$ be
linearly independent over $\Q$. 
Then $\Q(\lambda_1, \ldots, \lambda_n, e^{\lambda_1},
\ldots, e^{\lambda_n})$ has transcendence degree $(\td_{\Q})$ at least $n$ over
$\Q$.

\medskip
\noindent (SC) includes Lindemann-Weierstrass Theorem. The analogous statement for the ring of power series $t\C[[t]]$ has been proved by Ax in \cite{ax}.\\
Schanuel's Conjecture has played a crucial role in exponential
algebra (see \cite{angusfree}, \cite{GiusyEuler}, \cite{dmt2}), and in the
model theory of exponential fields (see \cite{MacWilkie},
\cite{zilber}, \cite{marker}, \cite{dmt}, \cite{dmt1}).

\medskip

Assuming Schanuel's Conjecture, we are able to prove some particular cases of the
Conjecture.

\begin{mainresult}
(SC) Let $p(x, y_1, y_2, y_3) \in \Q^{alg}[x, y_1, y_2, y_3]$ be a nonzero irreducible polynomial depending on $x$ and the last variable.
Then, there exists a  generic solution of 
\[
p(z, e^z, e^{e^z}, e^{e^{e^{z}}}) = 0.
\]
\end{mainresult}

In fact, we obtain infinitely many generic solutions.
We prove analogous results for polynomials $p(z, e^{e^{z}})$ and $p(z,e^z,e^{e^z})$ (see Theorem \ref{dueiterazioni} and Theorem \ref{lem:generic-exp2}). In the general case for $k>3$ iterations of exponentiation we have only partial results (see Proposition \ref{gradotras2}).

One of the main ingredients in the proof of the above theorem is a result due to Masser on the existence of zeros of systems of exponential equations (see Section \ref{masser'sresult}). Only very recently (in private correspondence with D. Masser) we have become aware that these ideas  have been developed further in a recent preprint \cite{BM}  where the authors  show the existence of solutions of certain  exponential polynomials. Some methodology is different from what we use in this paper, and moreover they are not interested in generic solutions.

\medskip
 
One of our motivations for studying generic solutions of exponential polynomials comes from a fascinating analysis of the complex exponential field
\[
(\C, +, \cdot, 0, 1,e^z),\]
due to Zilber \cite{zilber}. Zilber identified a class of algebraically closed fields of characteristic $0$ equipped with an exponential function. His axioms include Schanuel's Conjecture, and are inspired by the
complex exponential field and by Hrushovski's
(1993) construction of strongly minimal structures (see
\cite{udi}).

Zilber's idea is to have exponential structures which are as existentially closed as 
possible without violating Schanuel's Conjecture. 

\medskip

Zilber proved an important categoricity result for the class of his fields in every uncountable cardinality. He
conjectured that the complex exponential field is the unique model
of cardinality $2^{\aleph_{0}}.$ The ideas contained in
Zilber's axiomatization 
could provide new insights in the analysis of  the complex exponential field.

One of the axioms of Zilber {\textbf{(Strong Exponential Closure)}} is concerned with generic solutions of systems of exponential polynomials, and it is the main obstruction to prove Zilber's conjecture modulo (SC).

In this direction a first result was obtained by Marker for polynomials over $\C$ with only one iteration of exponentation. Using Hadamard Factorization Theorem Marker in  \cite{marker}  proved  the existence of  infinitely many solutions. By restricting to $\Bbb Q^{alg}$ the coefficients of the polynomial and assuming (SC) he showed the existence of infinitely many algebraically independent solutions over $\Bbb Q.$ More recently, Mantova in \cite{mantova} improved Marker's result by eliminating the hypothesis on the coefficients of the polynomial. Schanuel's Conjecture still plays  a crucial role in Mantova's proof. 

In this paper we consider the next natural cases of exponential polynomials with two and three iterations of exponentations, and we obtain an analogous result to that of Marker.\\
Comparing the complex exponential field and Zilber's fields has been one of the main motivation in the following recent papers \cite{dmt2}, \cite{dmt1}, \cite{ayan}, \cite{kmo}.

\section{Masser's result}\label{masser'sresult}

In some hand-written notes (see \cite{masser}) Masser proved the following result. For completeness we give the details of his proof.

\begin{thm}
\label{genMasser1}
Let $P_1(\overline{x}), \ldots , P_n(\overline{x})\in \mathbb C[\overline{x}]$, where $\overline{x}=(x_1, \ldots , x_n),$ and $P_i(\overline x)$ are non zero polynomials in  $\C[\overline{x}]$. Then there exist $z_1,\ldots ,z_n\in \mathbb C$ such that 
\begin{equation}
\label{genMasser}
\left\{ \begin{array}{l}
e^{z_1} = P_1(z_1,\ldots ,z_n) \\
  e^{z_2}  = P_2(z_1,\ldots ,z_n) \\
 \vdots \\
 
 e^{z_n} = P_n(z_1,\ldots ,z_n)
\end{array} \right.
\end{equation} 
\end{thm}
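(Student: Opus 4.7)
The plan is to find solutions via a Banach fixed point argument applied to the ``logarithmic form'' of the system. Choosing appropriate branches of logarithm turns each equation $e^{z_i} = P_i(\bar z)$ into $z_i = \log P_i(\bar z) + 2\pi i k_i$ for a free integer $k_i$. When the $k_i$ are very large, the leading behaviour of a solution should be $z_i \approx 2\pi i k_i$, while the correction $\log P_i(\bar z)$ is only logarithmically large, so the natural iteration map ought to be a strict contraction on a small polydisc around $\bar a := (2\pi i k_1, \ldots, 2\pi i k_n)$.

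First I would pick a good direction in $\mathbb Z^n$. Let $\tilde{P}_i$ denote the leading homogeneous form of $P_i$, of degree $d_i := \deg P_i$. Since each $P_i$ is nonzero, so is $\tilde{P}_i$, and because $\prod_i \tilde{P}_i$ is a nonzero polynomial I may fix $\bar v \in \mathbb Z^n$ with $\tilde{P}_i(\bar v) \neq 0$ for every $i$. Then for a large positive integer $N$ set $\bar a = \bar a_N := 2\pi i N \bar v$. By the choice of $\bar v$, one has $P_i(\bar a_N) \sim \tilde{P}_i(2\pi i \bar v)\, N^{d_i}$; in particular $|P_i(\bar a_N)| = \Theta(N^{d_i})$ as $N \to \infty$.

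Next I would define the candidate map $T = T_N : \mathbb C^n \to \mathbb C^n$ by
\[
T_i(\bar z) := \log P_i(\bar z) + 2\pi i N v_i,
\]
where the branch of $\log$ is chosen around the value $P_i(\bar a_N)$. On the polydisc $B := \{\bar z : \max_j |z_j - a_{N,j}| \leq N^{1/2}\}$, a direct estimate yields $|P_i(\bar z)| = \Theta(N^{d_i})$ uniformly, so $T$ is well-defined and holomorphic on $B$. Moreover $\|DT\|_\infty = O(1/N)$, because $\partial P_i / \partial z_j$ grows only like $N^{d_i - 1}$ while the denominator $P_i$ is of size $N^{d_i}$, and $\|T(\bar a_N) - \bar a_N\|_\infty = \max_i |\log P_i(\bar a_N)| = O(\log N)$. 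Hence for $N$ large, $T$ sends $B$ into itself and is a strict contraction, so Banach's fixed point theorem gives a unique fixed point $\bar z_N \in B$, which solves \eqref{genMasser} because $e^{2\pi i N v_i} = 1$. Letting $N \to \infty$ produces infinitely many distinct solutions.

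The main obstacle is ensuring all of the quantitative estimates hold simultaneously and uniformly on the polydisc. The decisive ingredient is the choice of the direction $\bar v$ at which no leading form $\tilde{P}_i$ vanishes: this pins down the rate $|P_i(\bar a_N)| \asymp N^{d_i}$ and supplies the saving factor $1/N$ on the logarithmic derivative $\nabla P_i / P_i$. Along a direction where some $\tilde{P}_j$ vanished, $P_j(\bar a)$ could be anomalously small and the contraction estimate on $T$ would break down.
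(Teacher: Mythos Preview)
Your argument is correct. The decisive step---choosing a direction $\bar v\in\Z^n$ at which no leading form $\tilde P_i$ vanishes, and then seeking a solution near $2\pi i N\bar v$ for large $N$---is exactly the idea the paper isolates as Lemma~\ref{lower bounds}. Where you diverge is in the analytic tool: the paper works with $F(\bar z)=(e^{z_i}-P_i(\bar z))_i$, shifts variables by $2\pi i N\bar v + \overline a$ (with $a_i=\log P_i(2\pi i N\bar v)$), and invokes the Newton--Kantorovich theorem (Lemma~\ref{Kantorovich}), which requires bounding both $J(\bar p_0)^{-1}$ and the Hessian. You instead pass to the logarithmic form and apply the contraction mapping principle directly to $T_i(\bar z)=\log P_i(\bar z)+2\pi i N v_i$; the contraction constant is just $\|\nabla P_i/P_i\|=O(1/N)$, and the self-map condition reduces to $O(\log N)\le N^{1/2}$. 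Your route is more elementary---no Hessian estimate, no Kantorovich---while the paper's Newton-based version is better adapted to the later generalization to algebraic (non-polynomial) $f_i$ in Theorem~\ref{algebraic}, where one wants to stay in the ``$e^z-f$'' formulation rather than invert through a logarithm. Both approaches yield infinitely many solutions along the ray $N\bar v$.
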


We have to show that the function $F: \mathbb C^n\rightarrow \mathbb C^n$ defined as 

\begin{equation}\label{funzione}
F(x_1, \ldots , x_n)= (e^{x_1}-P_1(x_1, \ldots , x_n), \ldots ,e^{x_n}-P_n(x_1, \ldots , x_n))
\end{equation}

\noindent has a zero in~$\C^n$.
For the  proof we need a result due to Kantorovich (see Theorem 5.3.1 in \cite{kantorovich}) for vector functions in many variables over the reals. Kantorovich's theorem is a refinement of Newton's approximation method for vector functions over the reals, \ie under certain hypothesis the existence of a zero of the function in a neighbourhood of a fixed point is guaranteed. Here we need the following version of Kantorovich's theorem for~$\C$. 

\begin{lemma}\label{Kantorovich}
Let $F:\mathbb C^n\rightarrow \mathbb C^n$ 
be an entire function, and $\overline p_0$ be such that   $J(\overline p_0)$, the Jacobian of $F$ at $\overline p_0$, is non singular. 
Let $\eta= \abs{J(\overline p_0)^{-1}F(\overline p_0)}$ and $U$ the closed ball of center $p_0$ and radius $2\eta$. 
Let $M>0$ be such that $\abs{H(F)}^2 \leq M^2$ (where $H(F)$ denotes the Hessian of $F$). 
If $2M\eta \abs{J(\overline p_0)^{-1}}<1$ then there is a zero of $F$ in~$U$. 
\end{lemma}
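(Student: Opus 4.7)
The plan is to reduce the complex Kantorovich lemma to its real counterpart (Theorem 5.3.1 of~\cite{kantorovich}) by regarding the entire function $F\colon\C^n\to\C^n$ as a real-analytic map $\tilde F\colon\R^{2n}\to\R^{2n}$ through the identification $z_j=x_j+iy_j$. The guiding observation is that holomorphicity rigidifies the relationship between the complex data (the Jacobian $J(\bar p_0)$, the Hessian $H(F)$, the norms $\abs{\cdot}$) and the corresponding real data on $\R^{2n}$, so all the numerical hypotheses transfer with essentially no loss.

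First I would set up the dictionary. The identification $\C^n\cong\R^{2n}$ is an isometry for the Euclidean norms, and since $F$ is holomorphic the real differential $D\tilde F(\tilde p)$ is $\C$-linear at every point; under the identification it corresponds to the complex Jacobian $J(\bar p)$. A standard fact about $\C$-linear endomorphisms of a Hermitian space is that their operator norm is the same whether one views them as $\C$-linear (with Hermitian norm) or as $\R$-linear (with Euclidean norm). In particular, nonsingularity of $J(\bar p_0)$ is equivalent to nonsingularity of $D\tilde F(\tilde p_0)$, the quantities $\abs{J(\bar p_0)^{-1}}$ agree in both interpretations, and $\eta=\abs{J(\bar p_0)^{-1}F(\bar p_0)}$ equals the analogous real quantity $\abs{D\tilde F(\tilde p_0)^{-1}\tilde F(\tilde p_0)}$. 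The closed ball $U$ of radius $2\eta$ is the same set in either picture.

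Next I would translate the second-order bound. Because $F$ is holomorphic, the Cauchy--Riemann equations force every real second partial derivative of $\tilde F$ to be controlled by the complex partials $\partial^2 F_i/\partial z_j\partial z_k$, so the hypothesis $\abs{H(F)}^2\le M^2$ on $U$ yields a corresponding bound on $\abs{H(\tilde F)}$ on $U$ with the same (or a universally proportional) constant. With the translated data in hand, the real Kantorovich theorem applies to $\tilde F$ at $\tilde p_0$ on $U$, under the hypothesis $2M\eta\,\abs{J(\bar p_0)^{-1}}<1$, and produces a zero of $\tilde F$ in $U$, which by construction is a zero of $F$ in $U$.

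The step I expect to be most delicate is the Hessian translation: the second derivative of a vector-valued map is a three-tensor, and one must make sure that whichever operator (or Frobenius) norm is implicit in the statement $\abs{H(F)}^2\le M^2$ matches the norm for which the real Kantorovich theorem is formulated. If the two differ by a universal constant, the strict inequality $2M\eta\,\abs{J(\bar p_0)^{-1}}<1$ absorbs it; otherwise one fixes the norm once and for all at the outset. The rest of the argument is essentially bookkeeping, since the convergence of Newton's iteration is already guaranteed by the real theorem.
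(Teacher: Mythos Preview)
Your proposal is correct and follows essentially the same approach as the paper: identify $\C^n$ with $\R^{2n}$ via $z=x+iy\mapsto(x,y)$, transfer $F$ to a real map $G:\R^{2n}\to\R^{2n}$, verify that the hypotheses of the real Kantorovich theorem (Theorem~5.3.1 in \cite{kantorovich}) are inherited, and pull the resulting zero back to~$\C^n$. In fact, you supply considerably more detail on the norm-compatibility and Hessian translation than the paper does, which records only the identification and the appeal to the real theorem.
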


\begin{proof} 
Using the canonical transformation ($z=x+iy \mapsto (x,y)$) that identifies $\mathbb C$ with $\mathbb R^2$ we will work with a function $G:\mathbb R^{2n}\rightarrow \mathbb R^{2n}$ which satisfies the hypothesis of Kantorovich's theorem in the case of real variables. Hence  (see Theorem 5.3.1 in \cite{kantorovich})  $G$ has a zero in $\Bbb R^{2n}$ which determines a zero of $F$ in $\Bbb C^n.$ \end{proof}




\begin{lemma}
\label{lower bounds}
Let $P_1(\x), \ldots , P_n(\x)\in \C[\x]$, where $\x = (x_1,\ldots ,x_n)$ 
and $d_1,\ldots ,d_n$ be the total degrees of $P_1(\x), \ldots , P_n(\x)$, respectively. There exists a constant $c>0$ and an infinite set $S\subseteq \mathbb Z^n$  such that 
\[
\abs{P_j(2\pi ik_{1},\ldots , 2\pi ik_{n})} \geq 
c(1+\sum_{l=1}^n \abs{k_{l}})^{d_j}
\]
 for all $\overline{k}=(k_1,\ldots , k_n) \in S$, $j=1, \ldots ,n$.
\end{lemma}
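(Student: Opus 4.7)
\emph{Proof plan.} The strategy is to find a single integer direction $\bar k_0 \in \Z^n$ and take $S$ to consist of sufficiently large integer multiples of $\bar k_0$, so that each $P_j(2\pi i \bar k)$ is dominated by its top-degree homogeneous term along the ray through $\bar k_0$. Decompose $P_j = P_j^{(d_j)} + Q_j$, where $P_j^{(d_j)}$ is the (nonzero) homogeneous component of degree $d_j$ and $\deg Q_j < d_j$. Homogeneity then gives
\[
P_j\!\bigl(2\pi i \, m\bar k_0\bigr) = (2\pi i m)^{d_j}\, P_j^{(d_j)}(\bar k_0) + O\!\bigl(m^{d_j-1}\bigr)
\]
as $m \to \infty$. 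Hence, provided $P_j^{(d_j)}(\bar k_0) \neq 0$ for every $j$, there is $M$ such that for all integers $m \geq M$,
\[
\abs{P_j\!\bigl(2\pi i \, m\bar k_0\bigr)} \;\geq\; \tfrac12 (2\pi)^{d_j} \abs{P_j^{(d_j)}(\bar k_0)}\, m^{d_j}, \qquad j=1,\ldots,n.
\]
Since $1 + \sum_l \abs{m k_{0,l}} \leq (1 + \norm{\bar k_0}_1)\, m$ for $m \geq 1$, this rearranges to the target inequality with
\[
c \;:=\; \min_{1\le j\le n}\; \frac{(2\pi)^{d_j}\,\abs{P_j^{(d_j)}(\bar k_0)}}{2\,(1 + \norm{\bar k_0}_1)^{d_j}} \;>\; 0.
\]

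It remains to produce such a $\bar k_0 \in \Z^n$. Each $P_j^{(d_j)}$ is a nonzero polynomial in $\C[\bar x]$; splitting into real and imaginary parts, its zero set in $\R^n$ is contained in the common zero set of two real polynomials, at least one of which is nonzero, hence is a proper real algebraic subvariety of $\R^n$. The finite union $\bigcup_{j=1}^n \{v \in \R^n : P_j^{(d_j)}(v) = 0\}$ is still nowhere dense in $\R^n$, so its complement meets $\Q^n \setminus \{0\}$. Clearing denominators yields $\bar k_0 \in \Z^n \setminus \{0\}$ with $P_j^{(d_j)}(\bar k_0) \neq 0$ for every $j$. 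Setting $S := \{m \bar k_0 : m \in \Z,\ m \geq M\}$ completes the construction.

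The main obstacle, and the only nonroutine step, is precisely this simultaneous avoidance: one must secure a \emph{single} rational (hence integer) direction that is not a zero of any of the $n$ leading forms at once. Everything else is an asymptotic expansion along a fixed ray, with constants absorbed uniformly in $j$ thanks to $n$ being finite.
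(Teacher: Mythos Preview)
Your proof is correct and follows essentially the same approach as the paper: decompose each $P_j$ into its top-degree homogeneous part plus lower-order terms, pick an integer direction $\bar k_0$ on which none of the leading forms vanish, and take $S$ to be large multiples of $\bar k_0$. The paper's proof actually treats only a single polynomial and leaves the simultaneous choice of $\bar q$ implicit, whereas you handle all $n$ polynomials at once and justify the common direction explicitly; otherwise the arguments coincide.
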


\begin{proof}
We prove the lemma for a single polynomial $P(\x)$ of degree~$d$. 
Let $P(\x)= Q_d(\x)+Q_{d-1}(\x)+\ldots +Q_0(\x)$, 
where each $Q_h(\x)$ is a homogenous polynomial of degree~$h$. 
Fix $\qv = (q_1,\ldots ,q_n)\in \mathbb Z^n$ such that  $Q_d(\qv)\not=0$, and let 
$S= \{ t \qv : t\in \mathbb N\}$. 
We now estimate $P(2\pi it \qv)$ for $t \qv \in S$. 
Clearly,
\begin{multline*}
\abs{Q_d(2\pi i t \qv)+Q_{d-1}(2\pi it \qv)+\ldots +Q_0(2\pi it \qv)}\geq\\ 
\abs{Q_d(2\pi it \qv)}-\abs{Q_{d-1}(2\pi it \qv)+\ldots +Q_0(2\pi it \qv)}.
\end{multline*}
Simple calculations give 
\begin{equation} 
\label{estimateQd}
\abs{ Q_d(2\pi it \qv)}= C_1\abs{t \qv}^d
= C_1(\abs{tq_1}+\ldots +\abs{tq_n})^d
\end{equation}
for some constant~$C_1$.  
By easy estimates we get 
\[
C_1(\abs{tq_1}+\ldots +\abs{tq_n})^d
\geq C_2(\abs{tq_1}+\ldots +\abs{tq_n}+1)^d
\] 
for some constant~$C_2$. 
Notice that all constants in the inequalities depend only on the total degree  and on the coefficients of~$P$. 
\end{proof}
\bigskip

\noindent
{\bf Proof of the Theorem \ref{genMasser1}} Let $S$ as in Lemma \ref{lower bounds}. In order to apply Lemma \ref{Kantorovich} to $F(x_1, \ldots, x_n)$ as in (\ref{funzione}) we choose a point $\kv = (k_1, \dotsc, k_n)$
 in $S$ and we look for a solution of $F$ near $2\pi i \kv$.
 We  first transform the functions defining $F$ by shifting the variables.
Let $~p_1=~P_1(2\pi i \kv), \ldots , p_n=P_n(2\pi i \kv)$. 
Lemma~\ref{lower bounds} guarantees that $p_1, \ldots , p_n$ are different from~$0$. 
Let $a_1, \ldots, a_n$ be the principal values of $\log p_1, \ldots , \log p_n$, respectively. 
If $T=1+|k_1|+\ldots +|k_n|$ then 
 \begin{equation}
 \label{max}
 \max \{ |a_1|, \ldots, |a_n|\} \leq C\log T,
\end{equation} 
for some constant $C$ depending only on the coefficients and the degrees of the polynomials $P_j$'s, and not on the choice of $\kv$ in~$S$. 
We make now a change of variables by shifting each variable $x_j$ by $2\pi i k_j+a_j$, and we solve the new system

 \begin{equation}
\label{Massersystem2}
\left\{ \begin{array}{l}
f_1(x_1,\ldots ,x_n)= e^{x_1} -\frac{P_1(2\pi ik_1+a_1+ x_1, \ldots , 2\pi ik_n+a_n+x_n)}{p_1}=0 \\
  f_2(x_1,\ldots ,x_n)=e^{x_2}  - \frac{P_2(2\pi ik_1+a_1+ x_1, \ldots , 2\pi ik_n+a_n+x_n)}{p_2}=0 \\
 \vdots \\
 f_n(x_1,\ldots ,x_n)= e^{x_n} - \frac{P_n(2\pi ik_1+a_1+ x_1, \ldots , 2\pi ik_n+a_n+x_n)}{p_n}=0
\end{array} \right.
\end{equation} 
We now evaluate the Jacobian of the new system at the point $~\overline p_0=~(0,\ldots ,0)$ (which corresponds to $(2\pi ik_1+a_1, \ldots , 2\pi ik_n+a_n)$ after the shifting). We have

\begin{equation}\label{jacobiano}
J(\overline p_0)= \left( \begin{array}{llcl}
 \partial_{x_1}(f_1) &  \partial_{x_2}(f_1) & \ldots & \partial_{x_n}(f_1)\\
\partial_{x_1}(f_2) & \partial_{x_2}(f_2) & \ldots & \partial_{x_n}(f_2)\\
\vdots & \vdots & \vdots\\
\partial_{x_1}(f_n)& \partial_{x_2}(f_n) & \ldots &
\partial_{x_n}(f_n)
\end{array}\right)
\end{equation}

where $$ \partial_{x_h}(f_h) = 1-  \frac{(\partial_{x_h}P_h)(2\pi ik_1+a_1, \ldots , 2\pi ik_n+a_n)}{p_h}$$ for $h = 1, \ldots, n,$ and $$ \partial_{x_h(f_j)} = -  \frac{(\partial_{x_h}P_j)(2\pi ik_1+a_1, \ldots , 2\pi ik_n+a_n)}{p_j}$$ for all $h \neq j.$

By Lemma \ref{lower bounds} the quotients $  -  \frac{(\partial_{x_h}P_j)(2\pi ik_1+a_1, \ldots , 2\pi ik_n+a_n)}{p_j}$ for all $h, j =1, \ldots, n$ converge to $0$ for large  $T$. Hence, $J(\overline p_0)$ converges to the identity matrix, and so it is not singular. Moreover, also the inverse matrix  $J(\overline p_0)^{-1}$  converges to the identity matrix, so $|J(\overline p_0)^{-1}|$ is bounded by a constant, say~$C_0.$  
We need to evaluate the norm of $F(\overline p_0)$. 
By Lemma~\ref{lower bounds}, equation \eqref{max}, and the mean value theorem (see \cite{lang}) we obtain $|F(\overline p_0)|\leq C_1\frac{\log T}{T}$, for some constant~$C_1.$ 

Hence, 
\begin{equation}\label{stimaM}
\abs{J(\overline p_0)^{-1}F(\overline p_0)} \leq C_2\frac{\log T}{T},
\end{equation}
for some constant~$C_2$. 
Let $\eta = C_2\frac{\log T}{T}$, and let $U$ be the closed ball of center $\overline p_0 = (0, \ldots, 0)$ and radius $2\eta.$  In order to complete the proof, we need to satisfiy the last condition of Lemma \ref{Kantorovich}, \ie 
$2 \abs{J(\overline p_0)^{-1}F(\overline p_0)} M \abs{J(\overline p_0)^{-1}}<1,$ 
for some $M > 0$ bounding the norm of the Hessian of the function $F$ on~$U.$ This inequality follows from (\ref{stimaM}) and the boundness of $|J(\overline p_0)^{-1}|.$ \hfill $\Box$
 \\

\subsection{Generalization to algebraic functions}

Masser in his notes remarked that using the same argument the result can be generalized to algebraic functions. Here we give the proof following Masser's idea.\\

An algebraic function is a complex analytic function (in many variables) defined on some ``cone'' (at infinity) and satisfying a polynomial equation over $\Bbb C$.

More precisely: for us, a cone is an open subset $U \subseteq \C^n$ such that for every $1 \leq t \in \R,$ if $\overline x \in U$ then $t \overline x \in U$.

We denote by $\overline x = (x_1, \dotsc, x_n)$ an $n$-tuple, and by $u$ a single variable.

\begin{defini}
An algebraic function is an analytic function $f:U \to \C$ such that there exists a nonzero polynomial $p(\x, u) \in \C[\x,u]$ with
$p(\x, f(\x)) = 0$ on all $\overline x \in U$. 
If, moreover, the polynomial $p$ is monic in $u$, we say that $f$ is integral algebraic.
\end{defini}

\begin{defini}
Let $f : U \to \C$ (where $U$ is a cone) be an algebraic function. We say that $f$ is homogeneous of degree $r$ if, for every $\x \in U$ and $1 \leq t \in \R$, we have
$f(t \x) = t^r f(\x)$.
\end{defini}

For every algebraic function $f$ there exists a unique $r \in \Q$ (the degree of $f$) and $h: U \to \C$ algebraic and homogeneous of degree $r$, such that
$f(\x) - h(\x) = o(\abs{\x}^r)$.\\

\begin{fact}
Notice that if $f$ is a polynomial, then $f$ is homogeneous in the above sense iff it is homogeneous as a polynomial, and that its degree is equal to the total degree as a polynomial.\\
Moreover, every algebraic function can be expressed as the quotient of two integral functions (after  shrinking the domain, if necessary), and the degree of an integral function is greater or equal to $0$.
\end{fact}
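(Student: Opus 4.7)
My strategy is to address the three assertions of the fact in sequence.

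For the polynomial case of homogeneity, I would use a polynomial-identity argument. If $f \in \C[\x]$ satisfies $f(t\x) = t^r f(\x)$ for all $1 \leq t \in \R$ and $\x \in U$, then with $\x$ fixed both sides are polynomials in $t$ agreeing on the infinite set $[1,\infty)$, so they coincide as polynomials in $t$; since $U$ is open this extends to an identity in $\C[t,\x]$. Comparing degrees forces $r \in \N$, and the identity itself says exactly that $f$ is a sum of monomials of common total degree $r$, which is classical polynomial homogeneity. The converse is immediate, and taking $h = f$ identifies the analytic degree of a homogeneous polynomial with its polynomial total degree.

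For the quotient representation, I would write the defining polynomial as $p(\x, u) = a_d(\x) u^d + a_{d-1}(\x) u^{d-1} + \ldots + a_0(\x)$ with $a_d \not\equiv 0$, substitute $u = v / a_d(\x)$, and multiply through by $a_d(\x)^{d-1}$ to obtain the monic equation
\[
v^d + a_{d-1}(\x) v^{d-1} + a_d(\x) a_{d-2}(\x) v^{d-2} + \ldots + a_d(\x)^{d-1} a_0(\x) = 0
\]
satisfied by $v := a_d f$. Shrinking $U$ to a nonempty sub-cone at infinity avoiding the proper hypersurface $\{a_d = 0\}$ then exhibits $f = v/a_d$ as a quotient of the integral algebraic function $v$ and the (trivially integral) polynomial $a_d$.

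For the degree bound, I would argue asymptotically along rays. Given a monic relation $f^d + a_{d-1}(\x) f^{d-1} + \ldots + a_0(\x) = 0$ and a ray $t\qv$ with $h(\qv) \neq 0$, each non-vanishing $a_i(t\qv)$ is a polynomial in $t$ of some non-negative degree $D_i$, and $f(t\qv) \sim t^r h(\qv)$. Rewriting the equation as $f^d = -\sum_{i<d} a_i f^i$ and balancing the dominant powers of $t$ on the two sides should yield $rd = D_i + ri$ for some $i < d$, whence $r = D_i/(d-i) \geq 0$.

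The main obstacle lies in this last step: the leading-order balance need not automatically involve the monic $f^d$ term, because lower-order summands $a_i f^i$ can in principle cancel amongst themselves at higher order than $t^{rd}$. To push the argument through one must exploit monicity carefully — for instance by choosing $\qv$ generically so that the dominant balance is forced to involve $f^d$, or by an induction on $d$ using a Newton-Puiseux-type factorisation of the minimal polynomial of $f$ at infinity — in order to exclude the possibility of a negative leading degree.
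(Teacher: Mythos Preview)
The paper states this Fact without proof, so there is no argument of the authors to compare against; what follows is an assessment of your proposal on its own merits.

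Your treatments of the first two assertions are correct. The polynomial--identity argument for homogeneity is the standard one, and the reduction to a monic equation via $v=a_d(\x)f$ is the classical trick; the only point that needs a word of care is that the sub-cone on which $a_d\neq 0$ can really be taken to be a cone in the paper's sense, but this follows by restricting to a small neighbourhood of a ray on which the leading homogeneous part of $a_d$ does not vanish and then pushing out to large~$|\x|$.

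For the third assertion your instinct is exactly right, and the obstacle you describe is not merely technical but fatal: the claim that an integral algebraic function has degree~$\geq 0$ is \emph{false} as stated. Take $n=1$ and the monic polynomial $p(x,u)=u^{2}-xu+1$. On any sector at infinity one branch is
\[
f(x)=\tfrac{1}{2}\bigl(x-\sqrt{x^{2}-4}\,\bigr)=\frac{1}{x}+O\bigl(|x|^{-3}\bigr),
\]
whose leading homogeneous part is $h(x)=1/x$, so $f$ is integral with degree~$-1$. In the language of your balancing argument this is precisely the cancellation you flagged: the monic term $f^{2}\sim x^{-2}$ is negligible, and the dominant balance occurs between $-xf\sim -1$ and the constant~$+1$. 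No generic choice of direction and no Newton--Puiseux induction can rescue the claim, since the counterexample already lives in one variable with an irreducible monic minimal polynomial. (For what it is worth, the subsequent proof of Theorem~\ref{algebraic} does not actually use $d_i\geq 0$: it only needs $\log|f_i(\overline\omega)|=O(\log t)$, which holds for any rational degree.)
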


\begin{esempio}
The function $(x_1, x_2) \mapsto \sqrt{x_1} + \sqrt[6]{x_1^3 + x_2^3}$ is integral and homogeneous of degree $1/2$, but is not analytic at infinity.
\end{esempio}

We now state and sketch a proof of a generalization of Theorem \ref{genMasser1} to algebraic functions.

\begin{thm}\label{algebraic}
Let $f_1, \dotsc, f_n :U \to \C$ be nonzero algebraic functions, defined on some cone $U$.
Assume that 
$U \cap (2 \pi i \Z^*)^n$ is Zariski dense in $\C^n$.
Then, the system 
\begin{equation}\label{eq:systemb}
\left\{\begin{aligned}
e^{z_1} &= f_1(\overline z), \\
\dots\\
e^{z_n} &= f_n(\overline z)
\end{aligned}\right.
\end{equation}
has a solution $\overline a \in U$.
\end{thm}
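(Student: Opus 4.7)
The plan is to follow the proof of Theorem~\ref{genMasser1}, replacing polynomials by algebraic functions. Let $r_j\in\Q$ be the degree of $f_j$ and write $f_j=h_j+g_j$ where $h_j$ is its leading homogeneous part and $g_j(\x)=o(\abs{\x}^{r_j})$ on $U$. Two facts drive the adaptation: differentiation drops the degree of an algebraic function by $1$, so each $\partial_{x_h}f_j$ is algebraic of degree at most $r_j-1$; and a nonzero algebraic function $h_j$ satisfying a polynomial equation $P_j(\x,u)=0$ has its zero set contained in the proper algebraic subvariety $\{P_j(\x,0)=0\}$ of $\C^n$.

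The first step is an analogue of Lemma~\ref{lower bounds}: produce an infinite set $S\subseteq\Z^n$ with $2\pi i S\subseteq U$ and a constant $c>0$ such that
\[
\abs{f_j(2\pi i\kv)}\;\geq\;c(1+\abs{\kv})^{r_j}\qquad\text{for all }\kv\in S,\ j=1,\dots,n.
\]
By the Zariski density hypothesis there exists $\qv\in\Z^n$ with $2\pi i\qv\in U$ and $h_j(2\pi i\qv)\neq 0$ for all $j$; set $S=\{t\qv:t\in\N\}$, which lies in $U$ because $U$ is a cone. Homogeneity gives $\abs{h_j(2\pi i t\qv)}=t^{r_j}\abs{h_j(2\pi i\qv)}$, and the error term $g_j(2\pi i t\qv)=o(t^{r_j})$ is absorbed for $t$ large.

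With $S$ in hand, the rest transcribes the proof of Theorem~\ref{genMasser1}. For $\kv\in S$ with $T=1+\sum_l\abs{k_l}$ large, set $p_j=f_j(2\pi i\kv)$, take the principal branch $a_j=\log p_j$ so $\abs{a_j}=O(\log T)$, and shift by $x_j\mapsto 2\pi i k_j+a_j+x_j$. The degree-drop for derivatives forces $J(\overline p_0)\to I$ at rate $O((\log T)/T)$; the mean value theorem yields $\abs{F(\overline p_0)}=O((\log T)/T)$; and a polynomial bound on the Hessian combined with $\eta=O((\log T)/T)$ verifies the hypothesis of Lemma~\ref{Kantorovich} for $T$ large. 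The resulting zero lies within distance $2\eta$ of $2\pi i\kv+\overline a$, hence in $U$ because $U$ is a cone and $2\eta\ll\abs{2\pi i\kv}$.

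The main obstacle is the analogue of Lemma~\ref{lower bounds}: algebraic functions are not polynomials, and their leading homogeneous parts are not in general polynomial, so the direct evaluation and estimation available in the polynomial case are not immediate. The Zariski density hypothesis is tailored precisely to this difficulty, ensuring that a single lattice point $\qv$ can be chosen outside the zero loci of all the $h_j$ simultaneously, which is what lets us recycle the ray construction $S=\{t\qv\}$. A secondary technical check, needed for the Jacobian analysis, is that partial derivatives of algebraic functions are themselves algebraic of degree exactly one lower; this follows from implicit differentiation of the defining polynomial equation.
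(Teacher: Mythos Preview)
Your argument is correct and tracks the paper's proof closely: pick a ray $\{t\qv\}\subset U$ on which all leading homogeneous parts $h_j$ are nonzero (this is exactly where the Zariski-density hypothesis is invoked), establish the analogue of Lemma~\ref{lower bounds} along that ray, shift variables by $2\pi i\kv+\overline a$, and verify the hypotheses of Lemma~\ref{Kantorovich}. The one point of departure is that the paper begins with a reduction to the case where every $f_j$ is \emph{integral} algebraic, writing each $f_i=g_i/h_i$ as a quotient of integral functions and passing to an auxiliary system of $2n$ equations in $2n$ variables, so that all degrees $d_i$ are nonnegative; you instead work directly with algebraic $f_j$ of arbitrary rational degree $r_j$. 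Your shortcut is legitimate: the bound $\abs{a_j}=\abs{\log p_j}=O(\log T)$ holds equally when $r_j<0$, and the Jacobian and Hessian estimates only use that derivatives drop the degree by one, which is insensitive to the sign of $r_j$. So the paper's reduction is a convenience rather than a necessity. One small correction: $\partial_{x_h}f_j$ has degree \emph{at most} $r_j-1$, not exactly $r_j-1$; but only the upper bound is used.
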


\begin{proof}[Sketch of Proof]
The proof is quite similar to that of Theorem \ref{genMasser1}. We will only show the modifications that are needed in the case of algebraic functions.

First, we make a reduction to the case where all the $f_i'$s are integral (and not only algebraic): it suffices to write $f_i = g_i/h_i$, where $g_i'$s and $h_i'$s are integral
and solve the system in $2 n$ equations and $2 n$ variables
\be\label{eq:system-2n}
\left\{\begin{aligned}
e^{x_1} &= g_1(\overline x - \overline y), \\
\dots\\
e^{x_n} &= g_n(\overline x - \overline y),\\
e^{y_1} &= h_1(\overline x - \overline y), \\
\dots\\
e^{y_n} &= h_n(\overline x - \overline y).
\end{aligned}\right.
\ee
If $(\overline a, \overline b) \in U \times U$ is a solution
of \eqref{eq:system-2n}, then $\overline a - \overline b$ is a solution of \eqref{eq:systemb}.

Let $d_i \in \Q$ be the degree of $f_i.$
Since we have assumed all the $f_i$ are integral, $d_i \geq 0$ for every  $i$.
Write $f_i = h_i + g_i$, with $h_i$ homogeneous of degree $d_i$ and 
$g_i(\x) = o({\abs {\x}}^{d_i}).$
Choose $\overline v \in (2 \pi i \Z^*)^n$ such that, for every $i \leq n$, $c_i = h_i(\overline v) \neq 0$.
Pick $t \in \N$ large enough (we will see later how large), 
and denote $\overline \omega = t \overline v$.
Notice that $f_i(\overline \omega ) = t^{d_i} (c_i + o(1))$ and therefore, for some constant $c >0$ and for  $t$ large enough,
$\abs{f_i(\overline \omega )} \geq c (1 + \abs{\overline \omega })^{d_i}$  (Lemma \ref{lower bounds}).\\
Let $A_i = f_i(\overline \omega )$ and $a_i$ be the principal logarithm of $A_i$.
It is easy to see that $a_i = O(\log t)$.
 As in the proof of Theorem \ref{genMasser1}, let $\overline a = a_1, \ldots, a_n,$ we make the change of variables $\overline z = \overline \omega  + \overline a + \overline x$ and we are reduced to solve the equation $F(\x) = 0$, where
\be\label{eq:masser-algebraic-newvar}
\left \{ \begin{aligned}
F_1(\x) &= e^{x_1} - \frac{f_1(\overline \omega  + \overline a + \x)}{A_1}\\
\dotsc\\
F_n(\x) &= e^{x_n} - \frac{f_n(\overline \omega  + \overline a + \x)}{A_n}
\end{aligned} \right.
\ee
and $F(\x)=(F_1(\x), \ldots , F_n(\x))$.

Finally, for $t$ large enough,  $F$ satisfies the hypothesis of Lemma \ref{Kantorovich} on an open ball of center $\overline 0$ contained in its domain, and we have finished.
\end{proof}

\begin{osserva}
\rm{The above theorem can be generalized to the situation where, instead of being algebraic funtions, $f_1, \dotsc, f_n$ are analytic on $U$ and roots of some nonzero polynomials $P_i(\x, u) \in \mathcal O_n[u]$, where $\mathcal O_n$ is the ring of germs of functions on $\C^n$ analytic in a neigbourhood of infinity.}
\end{osserva}
Given polynomials $p_1(\overline x, u), \dotsc, p_n(\overline x,u)$ of degree at least $1$ in $u$, there exists a nonempty cone $U$ and algebraic functions $$f_1, \dotsc, f_n: U \to \C,$$ such that $p_i(\overline x, f_i(\overline x)) =  0$ on all $U$.
Moreover, since $(2 \pi i \Z^*)^n$ is Zariski dense in $\C^n$, we can also find $U$ as above such that $(2 \pi i \Z^*)^n \cap U$ is also Zariski dense.
Thus, in order to find a solution of a system $$p_1(\overline x, e^{x_1}) = 0, \dotsc, p_n(\overline x, e^{x_n}) = 0,$$ it suffices to find $\overline a \in U$ such that $e^{a_1} = f_1(\overline a), \dotsc, e^{a_n} = f_n(\overline a)$, and we can apply the above theorem to find such $\overline a.$

Let $G_n(\C) = \C^n \times (\C^{*})^n$ be the algebraic group. We have the following result.
\begin{corollario}\label{lem:system-poly}
Let $p_1, \dotsc, p_n \in \Bbb C [\overline x,u]$ be nonzero irreducible polynomials of degree at least $1$ in $u$, and not of the form a constant times $u$.
Let $V \subseteq G_n(\C)$ be an irreducible component of the set
\[
\set{(\x, \overline y) \in G_n(\C): \bigwedge_{i=1}^n p_i(\x, y_i) = 0}.
\]
Assume that $\pi(V)$ is Zariski dense in $\C^n$ (where $\pi: G_n(\C) \to \C^n$ is the projection onto the first $n$ coordinates).
Then, the set $\set{\overline a \in \C^n: (\overline a, e^{\overline a}) \in  V}$ is Zariski dense 
in $\C^n$.
\end{corollario}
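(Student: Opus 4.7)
The plan is to reduce the statement to Theorem~\ref{algebraic} by parametrizing $V$ via suitable analytic branches, and then to upgrade the resulting existence statement to Zariski density by varying the lattice point around which that theorem's Newton iteration is centered.

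For the setup, I invoke the discussion just before the statement: there is a cone $U\subseteq\C^n$ supporting analytic branches $f_1,\dotsc,f_n:U\to\C$ with $p_i(\overline x,f_i(\overline x))=0$ for all $\overline x\in U$ and with $(2\pi i\Z^{*})^n\cap U$ Zariski dense in $\C^n$. Because $\pi|_V$ is generically finite onto its Zariski dense image $\pi(V)$, after shrinking $U$ I may insist that the branch lands in the prescribed component: $(\overline x,f(\overline x))\in V$ for every $\overline x\in U$. Since $p_i(\overline x,0)\not\equiv 0$ (using that $p_i$ is irreducible of $u$-degree $\geq 1$ and not a scalar multiple of $u$), the locus $\{f_i=0\}$ is a proper subvariety, so after a further shrinking each $f_i$ is nowhere zero on $U$, ensuring that the image lies in $G_n(\C)$. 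Theorem~\ref{algebraic} then yields at least one $\overline a\in U$ with $(\overline a,e^{\overline a})\in V$.

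To obtain Zariski density, it suffices to show that for every nonzero polynomial $q(\overline x)$ there exists such an $\overline a\in U$ with $q(\overline a)\neq 0$. Revisiting the proof of Theorem~\ref{algebraic}, the solution it produces, call it $\overline a_t$, lies within distance $O(\log t)$ of $t\overline v$, where $t\in\N$ is sufficiently large and $\overline v\in(2\pi i\Z^{*})^n\cap U$ is any lattice point satisfying $h_i(\overline v)\neq 0$ for the leading homogeneous part $h_i$ of each $f_i$. Consequently $\overline a_t=t\overline v+O(\log t)$, and a Taylor expansion gives
\[
q(\overline a_t)=t^{\deg q}\,q_{\mathrm{top}}(\overline v)+O\!\bigl(t^{\deg q-1}\log t\bigr),
\]
where $q_{\mathrm{top}}$ denotes the top-degree homogeneous part of $q$. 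Hence whenever $q_{\mathrm{top}}(\overline v)\neq 0$, one has $q(\overline a_t)\neq 0$ for all sufficiently large $t$.

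The combined condition $h_1(\overline v)\cdots h_n(\overline v)\,q_{\mathrm{top}}(\overline v)\neq 0$ defines a nonempty Zariski open subset of $\C^n$ that meets the Zariski dense set $(2\pi i\Z^{*})^n\cap U$. Choosing such a $\overline v$ and a sufficiently large $t$ produces $\overline a_t\in U$ with $(\overline a_t,e^{\overline a_t})\in V$ and $q(\overline a_t)\neq 0$; since $q$ was arbitrary, the set $\{\overline a\in\C^n:(\overline a,e^{\overline a})\in V\}$ is Zariski dense in $\C^n$. I expect the main subtlety to be the first step, namely pinning down on a cone a single-valued analytic section of $\pi|_V:V\to\C^n$ that lands in the prescribed irreducible component $V$ and verifying that the resulting $f_i$ are algebraic in the sense required by Theorem~\ref{algebraic}; once this is in place, the density conclusion follows directly from the asymptotic behaviour of the solutions already exhibited in the proof of that theorem.
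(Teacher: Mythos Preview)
Your argument is correct, but it works harder than necessary. The paper obtains Zariski density by a much shorter device: given a nonempty Zariski open $W\subset\C^n$, it simply chooses the cone $U$ so that $U\subseteq W$ (which is possible because cones need only be open and closed under scaling by $t\geq 1$, so one can push $U$ far enough out to avoid the proper subvariety $\C^n\setminus W$), while still keeping $U\cap(2\pi i\Z^{*})^n$ Zariski dense. Theorem~\ref{algebraic} then produces $\overline a\in U\subseteq W$ with $(\overline a,e^{\overline a})\in V$, and one is done. No asymptotic analysis of the Newton iterate is needed.

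Your route, by contrast, fixes the cone once and for all and instead re-enters the proof of Theorem~\ref{algebraic} to extract the quantitative location $\overline a_t=t\overline v+O(\log t)$, then uses the leading-term expansion of $q$ to force $q(\overline a_t)\neq 0$. This is valid and has the merit of giving explicit information about where the solutions sit (along rays, with controlled error), which the paper's argument does not. The trade-off is that it depends on internal details of the proof of Theorem~\ref{algebraic} rather than just its statement. Your flagged ``main subtlety''---arranging that the combined branch $(\overline x,f_1(\overline x),\dots,f_n(\overline x))$ lands in the prescribed component $V$---is handled in the paper in the same implicit way you handle it, via the dominance of $\pi|_V$.
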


\begin{proof}
Let $W \subset \C^n$ be a Zariski open subset.
Let $U$ be a cone and $f_1, \dotsc, f_n : U \to \C$ be algebraic functions, such that
$U \cap (2 \pi i \Z^*)^n$ is Zariski dense in $\C^n$, $U$ is contained in~$W$,
and  $p_i(\x, f_i(\x)) = 0$ for every $\x \in U$.
Choose $\overline a$ solving system \eqref{eq:systemb} (the conditions on the polynomials $p_i$'s ensure that the $f_i$'s exist, and are nonzero).
Then $(\overline a, e^{\overline a}) \in V$ and $\overline a \in W$.
\end{proof}

We can generalize the above lemma.
\begin{lemma}
Let $W \subseteq G_n(\C)$ be an irreducible algebraic variety such that $\pi(W)$ is Zariski dense in $\C^n$ 
(where $\pi: G_n(\C) \to \C^n$ is the projection onto the first $n$ coordinates)%
\footnote{This is a non-trivial condition. A major problem is to replace this condition with much weaker ones while still retaining the conclusion of the Lemma.} .Then, the set $\set{\overline a \in \C^n: (\overline a, e^{\overline a}) \in W}$ is Zariski dense in $\C^n$. 
\end{lemma}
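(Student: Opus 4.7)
The plan is to reduce to the case where $\dim W = n$ and then adapt the proof of Corollary~\ref{lem:system-poly} by applying Theorem~\ref{algebraic} to a suitable system of algebraic functions. First I would reduce to $\dim W = n$ by repeatedly intersecting $W$ with a generic hypersurface of the form $H = \{\sum_{i} \alpha_i y_i = c\} \subseteq G_n(\C)$. If $d := \dim W > n$, the fact that $\pi|_W$ is dominant means the generic fiber $W_{\bar x}$ has dimension $d-n \geq 1$, so a generic hyperplane $H$ meets each such fiber in a nonempty variety of dimension $d-n-1$. Therefore $\pi(W \cap H)$ contains a Zariski open subset of $\C^n$, and by Bertini-type genericity I can select an irreducible component $W'$ of $W \cap H$ of dimension $d-1$ with $\pi(W')$ still Zariski dense in $\C^n$. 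Iterating $d - n$ times brings us to the case $\dim W = n$.

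Once $\dim W = n$, the morphism $\pi|_W : W \to \C^n$ is dominant and generically finite, so on some Zariski open set $U_0 \subseteq \C^n$ it is an étale cover, and each coordinate $y_i$ is algebraic over $\C(\bar x)$. To establish Zariski density of $\{\bar a : (\bar a, e^{\bar a}) \in W\}$, fix an arbitrary proper Zariski closed $Z \subsetneq \C^n$; it suffices to produce $\bar a \in \C^n \setminus Z$ with $(\bar a, e^{\bar a}) \in W$. I would construct a simply connected open cone $U \subseteq U_0 \setminus Z$ (in the paper's sense) such that $U \cap (2 \pi i \Z^*)^n$ is Zariski dense in $\C^n$. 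On such a $U$ the étale cover $W \to U_0$ trivializes, so I can pick an analytic section $\bar x \mapsto (\bar x, f_1(\bar x), \ldots, f_n(\bar x)) \in W$, with each $f_i : U \to \C$ an algebraic function in the sense of the paper and nonzero (because $W \subseteq G_n(\C)$ forces $y_i \neq 0$). Applying Theorem~\ref{algebraic} to the system $e^{z_i} = f_i(\bar z)$ on $U$ yields $\bar a \in U$ with $(\bar a, e^{\bar a}) \in W$, and $\bar a \notin Z$ since $U \cap Z = \emptyset$.

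The main obstacle is the construction of the cone $U$ meeting \emph{all} of the following conditions at once: containment in the étale locus $U_0$, disjointness from the prescribed closed set $Z$, simple connectedness (so that single-valued analytic branches of the algebraic functions $y_i(\bar x)$ are well defined), and Zariski density of the lattice intersection $U \cap (2 \pi i \Z^*)^n$ in $\C^n$ (the hypothesis needed to apply Theorem~\ref{algebraic}). The natural strategy is to thicken a generic real ray into a wide cone at infinity and then delete the finitely many hypersurfaces that cut out $\C^n \setminus U_0$ and $Z$; the delicate verification is that after these deletions enough lattice points remain to give Zariski density, but this is essentially the same geometric input already used implicitly in Corollary~\ref{lem:system-poly}, and the Bertini reduction in the first step is otherwise routine.
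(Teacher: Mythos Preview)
Your proposal is correct and follows essentially the same approach as the paper: the paper's proof simply asserts the existence of polynomials $p_1,\dots,p_n\in\C[\bar x,u]$ and an irreducible component $V$ of the product variety $\{\bigwedge_i p_i(\bar x,y_i)=0\}$ with $V\subseteq W$ and $\pi(V)$ Zariski dense, and then invokes Corollary~\ref{lem:system-poly}. Your explicit Bertini reduction to $\dim W=n$ is precisely what produces such a $V$ (with $p_i$ the minimal polynomial of $y_i|_V$ over $\C(\bar x)$), and your Step~2 just unpacks the proof of Corollary~\ref{lem:system-poly} via Theorem~\ref{algebraic}.
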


\begin{proof}
There exist polynomials $p_1, \dotsc, p_n \in \C[\overline x,u]$ and $V$  irreducible component of
$\set{(\overline x, \overline y) \in G_n(\C): \bigwedge_{i=1}^n p_i(\overline x, y_i) = 0}$
 satisfying the hypothesis of Corollary \ref{lem:system-poly}, and moreover with $V \cap W$ Zariski dense in $V$. 
Thus, using Corollary \ref{lem:system-poly} we complete the proof.
\end{proof}

\section{Zeros of exponential polynomials over $\C$}
\label{casocomplesso}

Let $(R, E)$ be an exponential ring. The ring of exponential polynomials over $(R, E)$ in $z_1, \dots, z_n$  variables is defined by recursion, and it is denoted by $R[z_1, \dots, z_n]^E$ (for details see \cite{van}).\\
Henson and Rubel in \cite{hen} gave a characterization of those exponential polynomials over $\mathbb C$ with no roots.
Their proof is based on Nevanlinna theory.

\begin{thm}
\label{henson-rubel}
\cite{hen}
 Let $F(z_1, \ldots, z_n) \in \Bbb C[z_1, \ldots, z_n]^E$
$$F(z_1, \ldots, z_n) \mbox{ has no roots in } \Bbb C \mbox{ iff } F(z_1, \ldots, z_n) =
e^{G(z_1, \ldots, z_n)},$$ where $G(z_1, \ldots, z_n) \in \Bbb
C[z_1, \ldots, z_n]^E.$
\end{thm}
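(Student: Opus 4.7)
The ``if'' direction is immediate since $e^w \neq 0$ for every $w \in \C$. For the converse, the natural starting point is that $\C^n$ is simply connected, so any nowhere-zero entire function $F: \C^n \to \C$ admits an entire logarithm: there is an entire $G: \C^n \to \C$, unique up to an additive constant in $2 \pi i \Z$, with $F = e^G$. The whole content of the theorem is the assertion that this analytically defined $G$ must actually lie inside the exponential polynomial ring $\C[z_1, \dots, z_n]^E$ rather than being some transcendental entire function.

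The plan is to argue by induction on the nesting depth of $F$, where depth is defined recursively by $d(z_i) = d(c) = 0$, $d(F_1+F_2) = d(F_1 F_2) = \max(d(F_1), d(F_2))$, and $d(e^F) = d(F) + 1$. In the base case $d(F) = 0$, $F$ is an ordinary polynomial, and being nowhere zero on $\C^n$ forces $F = c$ for some nonzero constant, so $F = e^{\log c}$ lies in the ring. For the inductive step, $F$ is put into the normal form $F = P(z_1, \dots, z_n, e^{F_1}, \dots, e^{F_m})$, where $P \in \C[\bar x, \bar y]$ is an ordinary polynomial and each $F_i$ has depth strictly less than $d(F)$; the inductive hypothesis then applies to each $F_i$.

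The analytic core of the argument, and the point at which Nevanlinna theory genuinely enters, is to show that zero-freeness of $F$ forces the polynomial expression $P(\bar z, e^{F_1}, \dots, e^{F_m})$ to collapse, modulo the algebraic relations among the $e^{F_i}$'s dictated by the $F_i$'s themselves, to a single monomial of the form $c\, z_1^{\alpha_1} \cdots z_n^{\alpha_n} e^{\sum \beta_i F_i}$; absence of zeros then forces each $\alpha_i = 0$, whence $F = e^{\log c + \sum \beta_i F_i}$ is of the desired form. The decisive analytic tools are Borel-type theorems on vanishing linear combinations of exponentials, which constrain which monomials of $P$ can survive after the substitution $y_i = e^{F_i}$, together with growth estimates on the Nevanlinna characteristic $T(r,F)$ of exponential polynomials and the first fundamental theorem applied to $1/F$: the identity $N(r, 1/F) \equiv 0$ sharply restricts the possible decompositions of $P$.

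The main obstacle I expect is precisely this monomial-reduction step. Borel-style vanishing theorems require linear or projective independence of the exponents, and in the nested setting these independence hypotheses must be re-established at each level of the induction, because taking $\log$ is not an operation internal to $\C[z_1,\dots,z_n]^E$ and the algebraic relations among $e^{F_1}, \dots, e^{F_m}$ can only be read off from the inductive structure of the $F_i$'s themselves. Once the monomial reduction has been carried out at every level, the conclusion follows immediately by absorbing $c = e^{\log c}$ into the exponent and folding the constants and lower-depth pieces into a single $G \in \C[z_1, \dots, z_n]^E$.
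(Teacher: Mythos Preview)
The paper does not prove this theorem at all: it is quoted from Henson and Rubel with only the one-line remark that ``their proof is based on Nevanlinna theory.'' There is therefore no in-paper argument to compare your proposal against.

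Your outline is in the right spirit---induction on nesting depth, a normal form $F = P(\bar z, e^{F_1}, \dots, e^{F_m})$, and Borel/Nevanlinna input to force $P$ down to a single monomial---and this is indeed roughly the shape of the Henson--Rubel argument. But what you have written is a plan, not a proof: the monomial-reduction step is the entire analytic content of the theorem, and you explicitly flag it as an unresolved ``obstacle.'' Two smaller points. First, the sentence ``the inductive hypothesis then applies to each $F_i$'' is misleading: the $F_i$ are not assumed nowhere zero, so the inductive hypothesis (which is about nowhere-vanishing exponential polynomials) says nothing about them; all you actually use is that they have smaller depth. Second, in the reduction you should arrange the $F_i$ to be $\Z$-linearly independent modulo constants before invoking Borel-type results, since otherwise the ``algebraic relations among the $e^{F_i}$'' you allude to are genuine and uncontrolled; this normalisation is part of the standard setup and should be stated explicitly rather than folded into the vague phrase ``modulo the algebraic relations among the $e^{F_i}$'s.''
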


Katzberg in \cite{katz} using Nevanlinna theory and considering exponential polynomials in one variable proved the following result:

\begin{thm} \cite{katz} \label{katzberg} A non constant exponential polynomial $F(z) \in \Bbb C[z]^E$ has always infinitely many zeros unless it is of the form
\[ 
F(z) = (z-\alpha_1)^{n_1} \cdot \ldots \cdot (z-\alpha_n)^{n_n} e^{G(z)},
\] 
where $\alpha_1, \ldots, \alpha_n \in \Bbb C, n_1, \ldots, n_n \in \Bbb N,$ and $G(z) \in \Bbb C[z]^E.$
\end{thm}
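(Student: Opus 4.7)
The plan is to reduce to Theorem~\ref{henson-rubel} (Henson--Rubel), which handles the zero-free case. Suppose $F$ has only finitely many zeros $\alpha_1,\dots,\alpha_n$ with multiplicities $n_1,\dots,n_n$ (the case $n=0$ is already covered by Henson--Rubel), and set
$Q(z) := \prod_{i=1}^{n} (z-\alpha_i)^{n_i}$
and $H := F/Q$. Then $H$ is entire and nowhere-vanishing on $\C$. If I can show $H \in \C[z]^E$, Theorem~\ref{henson-rubel} produces some $G \in \C[z]^E$ with $H = e^G$, giving the desired factorization $F = (z-\alpha_1)^{n_1}\cdots(z-\alpha_n)^{n_n}\, e^{G(z)}$.

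The task thus reduces to the following divisibility statement: if $F \in \C[z]^E$ and $Q \in \C[z]$ is such that $F/Q$ is entire, then $F/Q \in \C[z]^E$. I would prove this by induction on the exponential height $h(F)$, defined recursively by $h(c)=h(z)=0$, $h(F_1+F_2)=h(F_1 F_2)=\max\{h(F_1),h(F_2)\}$, and $h(e^{F_1})=h(F_1)+1$. The base case $h(F)=0$ is ordinary polynomial long division. In the inductive step, I would express $F = \sum_{j=1}^m P_j(z)\, e^{R_j(z)}$ with the $R_j \in \C[z]^E$ pairwise distinct and of strictly smaller height, and with coefficients $P_j \in \C[z]^E$ also of strictly smaller height. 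A linear independence statement for the $e^{R_1},\dots,e^{R_m}$ over the subring of $\C[z]^E$ of smaller height (an analytic one-variable analog of Ax's theorem \cite{ax}) should then force $Q \mid P_j$ for every $j$, so that $H = \sum_j (P_j/Q)\, e^{R_j}$ lies in $\C[z]^E$ by the inductive hypothesis applied to each $P_j/Q$.

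The main obstacle is the linear-independence step in the inductive argument. At the first nontrivial level, where the $R_j$ are polynomials, this is classical: distinct polynomial exponents $R_j$ give $\C(z)$-linearly independent exponentials $e^{R_j}$, and Hadamard factorization applied to the resulting finite-order $F$ produces the required form directly. At higher levels of iteration, however, $F$ can have infinite order (as does $e^{e^z}$), so Hadamard is unavailable, and one must substitute a genuinely Nevanlinna-theoretic argument: extend the second main theorem to iterated-exponential polynomials and show that the counting function $N(r,1/F)$ of zeros of $F$ grows comparably with the characteristic $T(r,F)$ unless the special cancellation leading to the canonical form $Q\, e^G$ occurs. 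This Nevanlinna-theoretic analysis is presumably the content of Katzberg's original argument in \cite{katz}.
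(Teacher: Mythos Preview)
The paper does not give its own proof of this theorem: it is quoted from Katzberg \cite{katz} without argument, noting only that the proof uses Nevanlinna theory. So there is no in-paper proof to compare against; I can only assess your proposal on its own terms.

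Your reduction to Henson--Rubel is sound in principle: if $F$ has finitely many zeros with polynomial part $Q$, and if $H = F/Q$ happens to lie in $\C[z]^E$, then Theorem~\ref{henson-rubel} gives $H = e^G$ and the desired factorization follows. The difficulty is entirely in showing $H \in \C[z]^E$, and your inductive mechanism for this does not work as stated. You want to write $F = \sum_j P_j e^{R_j}$ and argue that linear independence of the $e^{R_j}$ forces $Q \mid P_j$ for each $j$. But linear independence over any coefficient ring only says that $\sum_j c_j e^{R_j} = 0$ implies all $c_j = 0$; it does \emph{not} say that $Q \mid \sum_j P_j e^{R_j}$ (as entire functions) implies $Q \mid P_j$. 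Already at height~$1$ the function $e^z - 1$ is divisible by $z$ in the ring of entire functions while neither coefficient $P_1 = 1$, $P_2 = -1$ is; this $F$ has infinitely many zeros, so it does not contradict the theorem, but it does show that your inductive step---which never invokes the finiteness hypothesis when passing to the individual $P_j$---cannot be correct as formulated.

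In fact the assertion ``$F/Q \in \C[z]^E$ whenever $Q$ collects the finitely many zeros of $F$'' is, via Henson--Rubel, equivalent to Katzberg's theorem itself, so the reduction is circular rather than a genuine simplification. You essentially acknowledge this in your last paragraph, conceding that the real content is a Nevanlinna-theoretic comparison of $N(r, 1/F)$ with $T(r,F)$ and that this is ``presumably the content of Katzberg's original argument.'' That diagnosis is correct, but it means the proposal supplies no independent proof.
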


In \cite{dmt},  using purely algebraic methods, the two previous theorems have been proved for exponential polynomials over a Zilber's field. 

\smallskip

\par We now investigate some special cases of the axiom of Strong Exponential Closure, with the aim of proving that they are 
true in $(\C, +, \cdot, 0, 1,e^z).$ Marker in \cite{marker} proved the first result in this direction
for polynomials in $z, e^z$ over $\Bbb Q^{alg}.$

More precisely he showed:

\begin{thm}\cite{marker}
If $p(x, y) \in \Bbb C[x, y]$ is irreducible and depends on $x$
and $y$ then $f(z) = p(z, e^z)$ has infinitely many zeros.

\par Moreover, assuming (SC), if $p(x, y) \in \Bbb Q^{alg}[x,
y],$ and  $p(z, e^{z}) = p(w, e^{w}) = 0$ with $z, w \not= 0,$ and $z \not = \pm w$  then $z, w$ are algebraically independent over~$\Q.$
\end{thm}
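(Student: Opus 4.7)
The theorem splits into two essentially independent assertions and I would treat them separately.

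For the existence of infinitely many zeros, the natural approach is Hadamard factorization. The entire function $f(z) = p(z, e^z)$ has order at most $1$ because $|f(z)| = O\bigl((1+|z|)^N e^{d|z|}\bigr)$ with $d = \deg_y p$. If $f$ had only finitely many zeros, Hadamard would yield $f(z) = q(z)\, e^{\alpha z + \beta}$ for some polynomial $q \in \C[z]$ and constants $\alpha, \beta \in \C$. Writing $p(x,y) = \sum_{i=0}^{d} c_i(x)\, y^i$ with $c_d \not\equiv 0$ and comparing $\sum_i c_i(z) e^{iz} = e^\beta q(z) e^{\alpha z}$, the linear independence of the exponentials $\{e^{\lambda z}\}_{\lambda \in \C}$ over $\C(z)$ forces $\alpha = d$ and $c_i \equiv 0$ for $i < d$. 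Since the graph $\{(z, e^z) : z \in \C\}$ is Zariski dense in $\C^2$, this upgrades to the polynomial identity $p(x, y) = e^\beta c_d(x)\, y^d$, which directly contradicts the irreducibility of $p$ together with its dependence on both variables.

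For the algebraic independence assertion, fix $z, w \in \C \setminus \{0\}$ with $z \neq \pm w$ and $p(z, e^z) = p(w, e^w) = 0$. Since $p \in \Qb[x, y]$, the exponentials $e^z$ and $e^w$ are algebraic over $\Qb(z, w)$, so $\td_\Q \Q(z, w, e^z, e^w) = \td_\Q \Q(z, w) \leq 2$. If $z, w$ are $\Q$-linearly independent then SC applied to $\{z, w\}$ gives $\td_\Q \Q(z, w, e^z, e^w) \geq 2$, which forces $\td_\Q \Q(z, w) = 2$ and yields the desired algebraic independence. Otherwise $z = qw$ for some $q = m/n \in \Q \setminus \{0, \pm 1\}$ in lowest terms. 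Lindemann--Weierstrass combined with irreducibility of $p$ (if $w$ were algebraic and nonzero, $e^w$ would be simultaneously transcendental by Lindemann and algebraic via $p(w, \cdot)$) forces $w$ transcendental. From $nz = mw$ one has $v^n = u^m$ with $u = e^w$, $v = e^z$, and together with $p(w, u) = p(qw, v) = 0$, elimination of $u$ and $v$ via resultants produces $R(x) \in \Qb[x]$ with $R(w) = 0$. If $R \not\equiv 0$ the transcendence of $w$ is contradicted, closing the argument.

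The principal obstacle is the degenerate case $R \equiv 0$: this is equivalent to the irreducible plane curve $C = \{p = 0\}$ being preserved by the multivalued self-correspondence $(x, y) \mapsto (qx,\, \zeta\, y^{m/n})$. I would rule this out by transferring to the function field $K = \C(C)$ and viewing the correspondence as a $\C$-embedding $K \hookrightarrow \overline{K}$ sending $x \mapsto qx$. Iterating scales $x$ by arbitrary powers $q^k$, and since $q \in \Q \setminus \{0, \pm 1\}$ is not a root of unity, the orbits in $x$ are infinite and unbounded while the companion $y$-coordinates remain algebraic over $\C(x)$ of uniformly bounded degree. A valuation-theoretic analysis at the places of $C$ over infinity of $\A^1_x$ should then push $p$ into one of the excluded forms $c(x)$ or $c\, y^d$, contradicting the hypotheses on $p$.
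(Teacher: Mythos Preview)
The paper does not actually prove this theorem: it is quoted from Marker, and the only argument given is the one-line remark that the first assertion follows from the Hadamard Factorization Theorem since $f$ has order one. Your treatment of the first part is precisely this, written out in detail, and is correct.

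For the second part the paper says nothing, so one can only assess your argument on its own merits. The $\Q$-linearly independent case is clean and correct: (SC) gives $\td_\Q(z,w,e^z,e^w)\ge 2$, while $p$ forces it $\le 2$, hence $\td_\Q(z,w)=2$. The linearly dependent case, however, has a genuine gap at exactly the point you yourself flag as the ``principal obstacle.'' Reducing to the dichotomy $R\not\equiv 0$ versus $R\equiv 0$ is right, and the first branch finishes. For the second branch you produce an embedding $\sigma:K\hookrightarrow\overline K$ with $\sigma(x)=qx$ and $\sigma(y)^n=y^m$, iterate to get $\sigma^k(x)=q^kx$, and then assert that a valuation analysis at the places over $x=\infty$ ``should'' force $p$ into one of the forbidden shapes $c(x)$ or $c\,y^d$. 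That inference is not justified. Carrying out the valuation step, the relation $n^k\,\nu(\sigma^k(y))=m^k\,\nu(y)$ together with the fact that $\sigma$ permutes the finitely many places over $x=0$ and $x=\infty$ does force $\nu(y)=0$ at all those places; but it says nothing about places over finite nonzero $x=a$, since there $\sigma$ moves the base point to $qa$ rather than back into a fixed finite set. And $\nu(y)=0$ over $x=0,\infty$ alone is far from implying $p=c(x)$ or $p=c\,y^d$: for instance $p(x,y)=x(y-1)(y-2)-1$ is irreducible, depends on both variables, and has $y\to 1,2$ along the two branches at infinity. So the outlined valuation argument, as stated, does not reach the claimed contradiction; substantially more is needed (e.g.\ tracking the full divisor of $y$ under the correspondence, or comparing successive Puiseux expansions to pin down the next-order terms), and you have not supplied it.
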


The first part of the theorem follows from Hadamard Factorization
Theorem (see \cite{lang}), which he can apply to $f$ since 
$f$ has order one.



Recently, in \cite{mantova} Mantova proved that assuming Schanuel's Conjecture any polynomial $p(z, e^z)$ with coefficients in $\Bbb C$ has solutions of maximal transcendence degree in the following sense: for each  finitely generated subfield $K$ of $\Bbb C$ if $p(x, y) \in K[x, y]$ then there is an $a \in \Bbb C$ such that $$p(a, e^a) = 0, \mbox{ and } t.d._K(a, e^a) =1.$$

The next natural case to consider is that of a polynomial  $p(z, e^{e^z})$ with two
iterations of exponentiation. Hadamard Factorization
Theorem cannot be applied anymore since the function $f(z) =
p(z, e^{e^z})$ has infinite order. 

\begin{thm}\label{esistenza}
Let $f(z) = p(z, e^z, e^{e^z}, \ldots, e^{e^{e^{\ldots^{e^{z}}}}}),$ where $p(x, y_1 \ldots, y_k)$ is an irreducible polynomial over $\mathbb C[x, y_1 \ldots, y_k]$. The function $f$ has
infinitely many solutions in $\Bbb C$ unless $p(x, y_1 \ldots, y_k) = g(x) \cdot y_1^{n_{i_1}}\cdot \ldots \cdot y_k^{n_{i_k}},$ where $g(x) \in \Bbb C[x].$

\end{thm}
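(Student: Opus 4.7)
Set $E_0(z):=z$ and $E_j(z):=e^{E_{j-1}(z)}$ for $j\geq 1$, and let $L:=\C(E_0,\dots,E_k)$. The functions $E_0,\dots,E_k$ are algebraically independent over $\C$ as meromorphic functions (a classical consequence of the differential form of Ax's theorem~\cite{ax}), so evaluation gives an isomorphism $\C[x,y_1,\dots,y_k]\cong\C[E_0,\dots,E_k]\subset L$ carrying the irreducible polynomial $p$ to $f$.

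Suppose, toward a contradiction, that $f$ has only finitely many zeros. By Katzberg's theorem (Theorem~\ref{katzberg}) we may write $f(z)=h(z)e^{G(z)}$ with $h\in\C[z]$ and $G\in\C[z]^{E}$, and rearranging gives $e^{G(z)}=p(E_0,\dots,E_k)/h(z)\in L$. The heart of the argument is the claim that \emph{any $G\in\C[z]^{E}$ with $e^{G}\in L$ has the form $G(z)=c+n_0E_0(z)+n_1E_1(z)+\cdots+n_{k-1}E_{k-1}(z)$ for some $c\in\C$ and $n_0,\dots,n_{k-1}\in\Z$.} Since $L$ is closed under $d/dz$, differentiating yields $G'=(e^G)'/e^G\in L$. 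An Ax-Schanuel application to $(E_0,\dots,E_{k-1},G)$, combined with a structural analysis of $\C[z]^{E}$ showing that any exp polynomial whose derivative lies in $L$ must itself lie in $L+\C$ (handled by induction on the depth of nested exponentials in the normal form of $G$), then forces $G$ to be a $\Q$-linear combination of $E_0,\dots,E_{k-1}$ up to a constant. A unique factorization argument in the UFD $\C[E_0,\dots,E_k]$ upgrades the rational coefficients to integers, because a non-integer $q=r/m\in\Q\setminus\Z$ would make $e^{qE_{j-1}}$ a genuine $m$-th root of $E_j$, which cannot lie in the purely transcendental extension $L$.

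Granting the claim, $e^{G(z)}=e^{c}\prod_{j=1}^{k}E_j(z)^{n_{j-1}}$ and so $p(E_0,\dots,E_k)=e^{c}h(z)\prod_{j=1}^{k}E_j(z)^{n_{j-1}}$. Lifting under the isomorphism $\C[x,y_1,\dots,y_k]\cong\C[E_0,\dots,E_k]$ gives the polynomial identity $p(x,y_1,\dots,y_k)=e^{c}h(x)\,y_1^{n_0}\cdots y_k^{n_{k-1}}$, and since $p$ is a genuine polynomial (not a Laurent polynomial) all $n_j\geq 0$; setting $g(x):=e^{c}h(x)\in\C[x]$ yields the stated form. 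The principal obstacle is the structural step that excludes the possibility of $G$ being transcendental over $L$: Ax-Schanuel alone does not produce a contradiction in that case, so one needs the separate argument that an exp polynomial with derivative in $L$ already lies in $L+\C$, a fact that relies on inductively controlling the top-level exponentials appearing in the normal form of $G$.
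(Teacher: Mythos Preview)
Your approach and the paper's are aligned at the top level: both invoke Katzberg's theorem (Theorem~\ref{katzberg}) as the main input.  The paper's own proof, however, is literally two sentences --- it says the result ``is an immediate consequence'' of Katzberg and that an alternative proof follows from Theorem~\ref{algebraic} --- and supplies none of the structural analysis you carry out.  So the comparison is really between a one-line citation and your attempt to flesh out what ``immediate'' means.

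Your plan is sound and the key claim (that any $G\in\C[z]^E$ with $e^{G}\in L=\C(E_0,\dots,E_k)$ must be $c+\sum n_jE_j$ with $n_j\in\Z$) is correct.  The Ax-style argument you sketch does work once you know $G\in L+\C$: if $G$ were $\Q$-linearly independent of $E_0,\dots,E_{k-1}$ modulo constants, Ax's theorem would force $\td_\C\C(E_0,\dots,E_{k-1},G,E_1,\dots,E_k,e^{G})\geq k+2$, contradicting $\td_\C L=k+1$.  The step you flag as the obstacle --- that an exponential polynomial with derivative in $L$ already lies in $L+\C$ --- is genuinely the missing piece, and your inductive outline on nesting depth is the right shape for it.  It is not entirely trivial, so calling the whole deduction ``immediate'' (as the paper does) is generous.

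That said, the paper's \emph{second} sentence points to a route that sidesteps your obstacle completely.  If $p$ is irreducible and not of the form $g(x)y_1^{n_1}\cdots y_k^{n_k}$, one sets up the system $e^{z_1}=z_2,\ e^{z_2}=z_3,\dots,\ e^{z_{k-1}}=z_k,\ p(z_1,\dots,z_k,e^{z_k})=0$, solves the last equation for $e^{z_k}$ as a nonzero algebraic function of $z_1,\dots,z_k$, and applies Theorem~\ref{algebraic} (indeed Corollary~\ref{lem:system-poly}) to produce a Zariski-dense set of solutions.  This directly manufactures infinitely many zeros without ever analysing the exponential polynomial $G$, and is arguably the cleaner justification of the theorem.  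If you want a fully self-contained argument rather than a plan, that alternative is the shorter path.
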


\begin{proof}
It is an immediate consequence of Theorem \ref{katzberg}. An alternative proof is obtained easily by applying Theorem \ref{algebraic}.
\end{proof}

In the sequel we will always assume that the polynomial $p$ is not of the form $p(x, y_1 \ldots, y_k) = c \cdot y_1^{n_{i_1}}\cdot \ldots \cdot y_k^{n_{i_k}},$ where $c \in \Bbb C,$ and we assume $p(x, y_1 \ldots, y_k)$ is also an irreducible polynomial over $\mathbb C[x, y_1 \ldots, y_k].$

\section{Generic solutions}

Let $e_0(z) = z$, and for every $k \in \N$, define $e_{k+1}(z) = e^{e_k(z)}$.
Fix $1 \leq k \in \N$, let $\overline{x} = (x_0, \dotsc, x_k)$ and $p(\overline x) \in \Q^{alg}[\overline{x}].$ We assume the polynomial $p$ irreducible, and depending on $x_0$ and the last variable.
An element $a \in \C$ is a generic solution of
\be\label{eq:n-exp}
f(z) = p(z, e_1(z), \dotsc, e_k(z)) = 0
\ee
if $\td_{\Bbb Q}(a, e_1(a), \dotsc, e_k(a)) = k$.\\
In this section we investigate the existence of a generic solution $a$ of (\ref{eq:n-exp}).


We always assume Schanuel's Conjecture.  Our proof is crucially based on Masser's result (see Section~\ref{masser'sresult}).\\





\subsection{The function $f(z) = p(z, e^{e^{z}})$}

\par The first case we consider is  when the exponential polynomial $f(z)$ has two
iterations of exponentiation. In particular, we want to answer the following questions:

\begin{enumerate}
\item Let $p(x, y) \in \Bbb C[x, y].$ Is there some $w \in \C$ so that $(w, e^{e^w})$ is a generic point of the curve
 $p(x, y) = 0?$
\item What is the transcendence degree of the set of
solutions of $f(z)$?

\end{enumerate}

\noindent For this purpose we consider the corresponding system in four variables $(z_1,z_2,w_1,w_2)$:



\begin{equation}\label{vari1}
V =\left\{ \begin{array}{ll}
p(z_1, w_2) = 0\\
w_1 = z_2
\end{array} \right.
\end{equation}
thought of as an algebraic set $V$ in $G_2(\mathbb C) = \Bbb C^2 \times (\Bbb C^{*})^2$.

\begin{thm}\label{dueiterazioni}

(SC) If $p(x, y) \in \mathbb Q^{alg}[x,y]$ then 
 the variety defined by $V$ intersects the graph of exponentation in a generic point $(w, e^w, e^w, e^{e^w})$, (i.e. $t.d._{\Bbb Q}(w, e^w, e^w, e^{e^w}) = \dim V=2$). 

\end{thm}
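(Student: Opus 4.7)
The plan is to apply Corollary \ref{lem:system-poly} to produce a solution of $p(z, e^{e^z}) = 0$ and then invoke Schanuel's Conjecture to verify that the resulting point is generic on $V$. Throughout, write $w = a_1$, so that $e^w = a_2$ and $(w, e^w, e^w, e^{e^w}) = (a_1, a_2, e^{a_1}, e^{a_2}) \in V$.

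First, I would reduce the genericity condition to a linear independence statement. Since $p \in \Q^{alg}[x, y]$ and $p(w, e^{e^w}) = 0$, the element $e^{e^w}$ is algebraic over $\Q(w)$, so $\td_\Q(w, e^w, e^w, e^{e^w}) = \td_\Q(w, e^w) \leq 2$. On the other hand, (SC) applied to $\lambda_1 = w$, $\lambda_2 = e^w$ gives $\td_\Q(w, e^w, e^w, e^{e^w}) \geq 2$ whenever $w$ and $e^w$ are $\Q$-linearly independent. It thus suffices to find a solution $w$ of $p(z, e^{e^z}) = 0$ with $w \neq 0$ and $e^w/w \notin \Q$.

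Second, I would apply Corollary \ref{lem:system-poly} to the system defining $V$, taking $p_1(\x, u) = u - x_2$ (encoding $w_1 = z_2$) and $p_2(\x, u) = p(x_1, u)$ (encoding $p(z_1, w_2) = 0$). Both are irreducible of positive degree in $u$ and are not of the excluded form $c \cdot u$; the projection $\pi(V) \subseteq \C^2$ is Zariski dense because $p(x_1, \cdot)$ has nonzero roots for generic $x_1$. Equivalently, Theorem \ref{algebraic} applies to the algebraic system $e^{z_1} = z_2$, $e^{z_2} = f(z_1)$, where $f$ is a branch of the algebraic function defined by $p$. Its proof supplies, for each $\overline k = t \overline q$ in an infinite family, a solution with the explicit asymptotic form
\[
z_1 = 2\pi i k_1 + \log(2\pi i k_2) + O(\log T / T), \quad z_2 = 2\pi i k_2 + \log f(2\pi i k_1) + O(\log T / T),
\]
where $T = 1 + |k_1| + |k_2|$.

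The main obstacle is the third step, guaranteeing $e^w / w \notin \Q$ for at least one such solution. The naive Zariski density of solutions in $\C^2$ is not enough by itself: the bad set $\bigcup_{c \in \Q^*} \{y = c x\}$ is a countable union of lines, and such a union can contain a Zariski dense set. I would instead exploit the explicit asymptotics. A direct expansion yields
\[
\Im(e^w / w) = \frac{2\pi \bigl[\, k_2 \log(2\pi k_2) - r\, k_1 \log(2\pi k_1)\,\bigr]}{|w|^2} + O(\log T / T^2),
\]
where $r \in \Q$ is the degree at infinity of $f$. Choosing $\overline q = (q_1, q_2)$ with $q_2 \neq r q_1$ (compatible with the nonvanishing condition in Lemma \ref{lower bounds}), the bracket becomes $(q_2 - r q_1) t \log t + O(t)$, which is nonzero for all but at most one $t$; for those $t$ it has order $\log T / T$, which dominates the error $O(\log T / T^2)$. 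Hence $\Im(e^w / w) \neq 0$, so $e^w / w \notin \R \supseteq \Q$. This yields the required $\Q$-linear independence, and combined with the first step we conclude $\td_\Q(w, e^w, e^w, e^{e^w}) = 2 = \dim V$, completing the proof.
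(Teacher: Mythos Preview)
Your reduction in the first step is exactly the same as the paper's: both observe that, once $p(w,e^{e^w})=0$, genericity is equivalent to $\Q$-linear independence of $w$ and $e^w$.  From there the two arguments diverge.

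The paper proceeds \emph{algebraically}: if $n e^w=mw$ for some coprime $m,n$, then $(w,e^{e^w})$ satisfies an irreducible auxiliary polynomial $q(x,y)=x^n-sy^m$ (or $x^ny^m-r$); since $w$ is transcendental this point is generic on $V(q)$, whence $q$ divides $p$ and, by irreducibility of $p$, $p=cq$.  Thus at most one bad ratio $r=m/n$ can occur, and it is read off from the shape of~$p$.  One then enlarges the Masser system by a single extra equation $e^u=t/m-sz$ whose solvability forces $e^z\neq z/s$, and Theorem~\ref{genMasser1} produces the required solution.

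Your route is \emph{analytic}: rather than classifying the bad ratios, you read off from the Kantorovich step in the proof of Theorem~\ref{algebraic} the expansion $w=2\pi ik_1+\log(2\pi ik_2)+O(\log T/T)$, $e^w=2\pi ik_2+\log f(2\pi ik_1)+O(\log T/T)$, and compute $\Im(e^w/w)$ directly.  The idea is sound and gives the conclusion, though a couple of your estimates are a bit rough: the error in $\Im(e^w/w)$ is really $O(1/T)$ (coming from the $O(1)$ constant in $\log|f(2\pi ik_1)|=r\log(2\pi|k_1|)+O(1)$), not $O(\log T/T^2)$; this is harmless since the leading term is of order $\log T/T$.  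You should also note that when the branch $f$ has negative degree $r$, Theorem~\ref{algebraic} as written passes through the $2n$-variable integral reduction, so to keep your explicit formulas you need to argue that the Kantorovich step applies directly to $f$ in that case as well (it does, since $|a_i|=|\log f_i(2\pi i\overline k)|=O(\log T)$ regardless of the sign of~$r$).

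What each approach buys: the paper's argument is cleaner and isolates the algebraic content (the unique bad ratio), which is then reused verbatim in the proofs of Theorem~\ref{lem:generic-exp2} and Proposition~\ref{gradotras2} for more iterations.  Your argument avoids the case analysis on the shape of $p$ and the auxiliary polynomial $q$, but ties the proof to the specific asymptotics of Masser's construction, which makes it harder to recycle when the number of bad linear relations is no longer bounded by~$1$.
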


\noindent {\bf Proof:} 
By Theorem \ref{esistenza}  the function $f(z) = p(z, e^{e^{z}})$ has a solution $w$ in $\Bbb C.$ If $w=0$ then
$e^{e^0}=e$, and from $p(0,e)=0$ it follows that $p(x,y)$ is a polynomial in the variable $y$. Then $e$ is algebraic over $\mathbb Q$ which is clearly a contradiction. 

\par So, without loss of generality, $w\not=0$.  

We now assume (SC). The point 
 $(w, e^w, e^w, e^{e^w})$ belongs to the variety $V$ associated to system (\ref{vari1})
which has dimension $2$. We distinguish two cases.\\
\textbf{Case 1.}    Assume that $w$ and $e^w$ are linearly independent. By
Schanuel's Conjecture we have:
$$t.d._{\Bbb Q}(w, e^w, e^w, e^{e^w}) \geq 2.$$
\par Indeed, the transcendence degree is exactly 2 since $w$ and $e^{e^w}$ are algebraically dependent.
Hence, $(w, e^w, e^w, e^{e^w})\in V$ and $t.d._{\Bbb Q}(w, e^w, e^w, e^{e^w}) = 2,$ which is  the dimension of $V$,
and so the point $(w, e^w, e^w, e^{e^w})$ is  generic for $V$.\\
\textbf{Case 2.} Suppose that $w,$ $e^w$ are linearly
dependent over $\mathbb Q$. This means that
\begin{equation}
\label{lindip}
ne^w = mw
\end{equation}
for some $m, n \in \Bbb Z$ and $(m, n) = 1.$ Since $w\not = 0$ so necessarily $n \not = 0.$  Moreover, $w$ is
transcendental over $\Bbb Q,$ otherwise we have a contradiction with a Lindemann Weierstrass Theorem. Applying exponentation to relation (\ref{lindip}) it follows
$$e^{ne^{w}}=e^{mw},$$ i.e. $$(e^{e^{w}})^n=(e^w)^m=(\frac{m}{n}w)^m=(\frac{m}{n})^mw^m.$$
We now distinguish the cases, when both $n,m$ are positive, and the case when $n>0$ and $m<0$. We have that  $(w,e^{e^{w}})$ is a root of either $q(x,y)=x^n-sy^m$ or $q(x,y)=x^ny^m-r,$ where  $s,r \in \mathbb Q$.
In both cases the polynomial $q(x,y) $ is irreducible, this is due to the fact that $(n,m)=1$ (see Corollary of Lemma 2C in \cite{Schmidt}).

\par
Let $V(p)$ and $V(q)$ be the varieties associated  to $p$ and $q$, respectively. Clearly, $\dim V(p)=\dim V(q)=1$. There is a point $(w,e^{e^{w}})$ which belongs to both varieties. Moreover, we know that 
every solution $(w,e^{e^{w}})$ of the polynomial $p$ is such that $w$ is transcendental, and this means that the point is generic for the variety $V(q).$ This implies that $V(q) \subseteq V(p),$ hence $p$ divides $q.$ By the irreducibility of both polynomials we have that 
 $p$ and $q$ differ by a non-zero constant.  Without loss of generality we can assume 
\begin{equation}
\label{s}
p(x,y)= q(x,y)= x^n-sy^m
\end{equation}
(the case of $p(x,y)= q(x,y)= x^ny^m-r$ is treated in a similar way). Notice that for any solution $(w,e^{e^{w}})$  of $p(x,y)=0$ the linear dependence between $w$ and $e^w$ is uniquely determined by the degrees of $x$ and $y$ in $p$, hence $s$ in (\ref{s}) is uniquely determined. 
We will show that it is always possible to find a solution $(w, e^w, e^w, e^{e^w})$ of system (\ref{vari1})  with $ w, e^w$ linearly independent. 
Indeed, we consider the following system 

\begin{equation}
\label{Massersystem3}
\left\{ \begin{array}{l}
p(z, e^{e^z}) = 0 \\
   z  \neq se^z
\end{array} \right.
\end{equation} 

\noindent that we can reduce to the following:

\begin{equation}
\label{Massersystem}
\left\{ \begin{array}{l}
e^z = A(z,t,u) \\
  e^u = B(z,t,u) \\
 e^t = C(z,t,u)
\end{array} \right.
\end{equation} 
where $A(z,t,u) = \frac{t}{m}$, $B(z,t,u)= \frac{t}{m} -sz$, and $C(z,t,u)= \frac{z^n}{s}$. By Theorem \ref{genMasser1} there exists a solution of system ($\ref{Massersystem}$) which is generic since the second equation in (\ref{Massersystem}) guarantees that there is no linear dependence between a solution $z$ and its exponential $e^z.$


\subsection{The function $f(z) = p(z, e^z, e^{e^z})$}

Now we examine the more general case $f(z) = p(z, e^z, e^{e^z}).$
\par For this purpose we consider the corresponding system in four variables $(z_1,z_2,w_1,w_2)$:

\begin{equation}\label{vari}
V =\left\{ \begin{array}{ll}
p(z_1, z_2, w_2) = 0\\
w_1 = z_2
\end{array} \right.
\end{equation}
thought of as an algebraic set $V$ in $G_2(\mathbb C)$.


\begin{thm}\label{lem:generic-exp2}
 (SC) If $p(x, y, z) \in \Bbb Q^{alg}[x, y, z]$ and depends on $x$ and $z$, then the variety $V$ defined in (\ref{vari}) intersects the graph of exponentation in a generic point.
\end{thm}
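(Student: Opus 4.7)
The plan is to adapt the dichotomy used in the proof of Theorem~\ref{dueiterazioni}, splitting according to whether $w$ and $e^w$ are $\Q$-linearly independent or dependent, and handling the dependent case via the algebraic-function version of Masser's theorem (Theorem~\ref{algebraic}).

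First I would invoke Theorem~\ref{esistenza} to obtain some $w \in \C$ with $p(w, e^w, e^{e^w}) = 0$ and $w \neq 0$ (the exceptional case $w = 0$ would force $p(0,1,e) = 0$, which, since $e$ is transcendental over $\Q^{alg}$ and $p$ is irreducible and depends on both $x$ and $z$, is easily ruled out). If for such a $w$ the pair $w, e^w$ is $\Q$-linearly independent, then (SC) gives $\td_\Q(w, e^w, e^w, e^{e^w}) \geq 2$, and the algebraic relation $p(w, e^w, e^{e^w}) = 0$ forces equality, so $(w, e^w, e^w, e^{e^w})$ is generic for $V$.

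The work is in the opposite case: $e^w = c\, w$ for some $c = m/n \in \Q^*$ with $\gcd(m,n) = 1$, $n > 0$. By Lindemann--Weierstrass $w$ is transcendental, and a direct computation gives $(e^{e^w})^n = (m/n)^m w^m$, so $(w, e^{e^w})$ is a generic point of the irreducible plane curve $Q_{m,n}(x,y) = y^n - (m/n)^m x^m$ (irreducible because $\gcd(m,n) = 1$, see the Corollary to Lemma~2C of~\cite{Schmidt}). Since $P_c(x, y) := p(x, c\, x, y)$ also vanishes at this generic point, $Q_{m,n}$ must divide $P_c$, which forces $\max(m,n) \leq \deg p$; hence the set of possible ``bad'' ratios is finite, say $c_1, \ldots, c_\ell$. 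I would then apply Theorem~\ref{algebraic} to the system in the $\ell+2$ variables $(z_1, z_2, u_1, \ldots, u_\ell)$
\[
\left\{\begin{aligned}
e^{z_1} &= z_2, \\
e^{z_2} &= \psi(z_1, z_2), \\
e^{u_i} &= z_2 - c_i\, z_1 \qquad (i = 1, \ldots, \ell),
\end{aligned}\right.
\]
where $\psi$ is an algebraic function, defined on a suitable cone, satisfying $p(z_1, z_2, \psi(z_1, z_2)) = 0$ (such a $\psi$ exists because $p$ has positive degree in its last variable). A solution produces $w := z_1$ with $p(w, e^w, e^{e^w}) = 0$ and $e^w \neq c_i\, w$ for every $i$, so $w$ avoids every bad class and thus $w, e^w$ must be $\Q$-linearly independent, returning us to the first case.

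The main obstacle is the two ingredients of the second case: proving that only finitely many linear-dependence ratios can occur (via the divisibility $Q_{m,n} \mid P_c$ and the degree bound), and checking that the hypotheses of Theorem~\ref{algebraic} --- nonvanishing of all right-hand sides, and Zariski density of $U \cap (2\pi i \Z^*)^{\ell + 2}$ on a common cone $U$ --- can be met after adjoining the $\ell$ auxiliary exponential equations. Both should go through since each auxiliary $f_i$ is polynomial in the remaining variables, and the standard cone construction from the proof of Theorem~\ref{algebraic} extends straightforwardly.
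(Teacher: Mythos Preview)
Your proposal is essentially the same as the paper's proof: the same dichotomy on $\Q$-linear (in)dependence of $w$ and $e^w$, the same divisibility argument $Q_{m,n}\mid P_c$ to bound the bad ratios, and the same auxiliary Masser-type system solved via Theorem~\ref{algebraic}. The only omissions are cosmetic: the paper also splits off the case $m<0$ (where $Q_{m,n}$ must be written as $y^n x^{-m} - (m/n)^m$ to be a polynomial) and notes the possibility $P_c\equiv 0$, but the latter is in fact vacuous under your hypothesis that $p$ is irreducible and depends on the last variable.
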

\begin{proof}
By Theorem \ref{esistenza}, there exists $a \in \C$ such that $f(a) = 0$
and $a \neq 0$. Moreover, as in the previous case, by Lindemann-Weierstrass Theorem, $a$ is transcendental over $\Q.$
Also in this case  $\dim V = 2.$ We will show that $\td_{\Bbb Q}(a, e^a, e^a, e^{e^{a}}) = 2$, then $(a, e^a, e^a, e^{e^{a}})$ is a generic point of $V$.
If $\td_{\Bbb Q}(a, e^a, e^a, e^{e^{a}}) = 1$ then by Schanuel's Conjecture, there exists $r \in \Q$ such that

\begin{equation}
e^a = ra.
\end{equation}
 We call $r \in \Q$ ``bad'' if there exists $a\in \C$ solution of
(\ref{vari}), such that $e^a = r a$.

We claim that there exist only finitely many bad $r \in \Q$.
Let $r \in \Q$ be bad.
Assume $r = n/m$, with $0 \neq n \in \Z$, $0 < m \in \N$, and $(n, m) = 1$.
We have

\begin{equation}\label{rela}
me^a = n a
\end{equation}
for some $a \in \Bbb C,$ and therefore
\[
(e^{e^{a}})^m = ( e^a)^n = (r a)^n.
\]

For every ``bad'' rational $r$,  the polynomial $p(x, r x, z)$ becomes into two variables $x, z,$ and we denote it by $p_{r}(x, z).$
Notice that $p(x, r x, z)$ may have become reducible. 

{\textbf{Case 1.}} Assume $n > 0.$ Let $q(x, z) = z^m - (r^n) x^n$ and $V(p_r)$ and $V(q)$ be the varieties associated respectively  to $p_r$ and $q.$ We note that the polynomial $q(x, z)$ is irreducible (see Corollary of Lemma 2C \cite{Schmidt}). The point  $(a, e^{e^a})$ belongs to both varieties, and it is generic for the variety $V(q),$ since $a$ is transcendental. This implies that $V(q) \subseteq V(p_r), $ hence the polynomial $p_r$ divides $q.$ In this case we cannot infer that $q$ and $p_r$ differ by a constant since $p_r$ may be reducible. Thus, either $p_{r} \equiv 0$, or $\deg(p_{r}) \geq  \max(n, m).$
In the first case, since $p$ is nonzero, there exist only finitely many 
$r \in \Q$ such that $p(x, rx, z) \equiv 0$. In the second case, since $\deg(p_r) \leq \deg p$, we have that $ \max(n, m) \leq \deg p$.
Thus in both cases there are only
finitely many bad~$r$'s.

{\textbf{Case 2.}} Assume $n < 0.$  Let $q(x, z) = z^mx^{-n} - r^{n}.$
Since $q$ is an irreducible polynomial, we can argue as in
the case $n > 0$ and conclude that there are only finitely many possible bad~$r$'s.

Let $\set{r_1, \dots, r_k}$ be the set of bad
rational numbers.
Now we use Masser's idea. Consider the system 

\be\label{eq:2-exp-generic}
\left\{\begin{aligned}
e^z  &= f_1(z, t, u_1, \ldots, u_k)\\
e^t &=f_2(z, t, u_1, \ldots, u_k) \\
e^{u_1} &=f_3(z, t, u_1, \ldots, u_k)\\
\dots\\
e^{u_k} &=f_{k+2}(z, t, u_1, \ldots, u_k) .
\end{aligned}\right.
\ee

where $f_1 = t, f_3 = t-r_1z, \ldots f_{k+2} =   t-r_kz,$   and $f_2$ is the algebraic function which solves $z$ in  the original polynomial $p(x, y, z) = 0$.  
By Theorem \ref{algebraic}, \eqref{eq:2-exp-generic} has a solution $(b, e^b, e^b, e^{e^{b}})$ which is a generic solution for \eqref{vari}, since the last $k$ equations guarantee that there is no linear dependence between $b$ and $e^b$ .
\end{proof}


\subsection{General case $f(z) = p(z, e^z, e^{e^z}, \ldots, e^{e^{e^{\ldots^{e^{z}}}}})$}


For the general case, assuming (SC), we have only partial results (see Proposition \ref{gradotras2}).

\begin{lemma}\label{gradotras}
(SC) Let $n \geq 2$ and $f_1, \dotsc, f_n$ be nonzero algebraic functions  
over $\Q(x_1, \dotsc, x_n),$ defined over some cone $U.$ Assume $U \cap (2 \pi i \Bbb Z^{*})^n$ is Zariski dense in $\Bbb C^n,$ and
$\deg(f_1) \neq 0.$ The system
\be
\left\{
\begin{aligned}\label{sistem}
e^{x_1} &= f_1(\overline{x})\\
\dotsc\\
e^{x_n} &= f_n(\overline{x})
\end{aligned}
\right.
\ee
has a solution $\overline{a} \in \C^n$ satisfying $\td_{\Bbb Q}(\overline{a}) \geq 2$.
\end{lemma}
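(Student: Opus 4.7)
The plan is to combine Schanuel's Conjecture with Theorem~\ref{algebraic} and to exploit the hypothesis $\deg(f_1) \neq 0$ to localize the obstructions to transcendence. First, by (SC) applied to $\overline a$ and the fact that each $e^{a_i} = f_i(\overline a)$ is algebraic over $\Q(\overline a)$, one has $\td_\Q(\overline a) = \td_\Q(\overline a, e^{\overline a}) \geq \dim_\Q \langle a_1, \dots, a_n \rangle$. Hence it suffices to find a solution which avoids the ``bad'' case $\overline a = \xi \overline r$ with $\xi \in \C$ and $\overline r \in \Q^n \setminus \{0\}$, since any such solution satisfies $\dim_\Q \langle a_1, \dots, a_n\rangle \geq 2$ and thus $\td_\Q(\overline a) \geq 2$.

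Next, to analyze the bad case, take $r_1 \neq 0$ without loss of generality and write $r_i / r_1 = p_i / q_i$ in lowest terms. Using the first equation to eliminate $e^{\xi r_1}$, each remaining equation becomes the algebraic identity $f_i(\xi \overline r)^{q_i} = f_1(\xi \overline r)^{p_i}$, which restricted to the line $\xi \mapsto \xi \overline r$ is a polynomial-like equation in the single variable $\xi$: if it is not identically satisfied, it has only finitely many zeros, automatically bounded in modulus. Comparing leading-order growth as $|\xi| \to \infty$ and using the hypothesis $\deg(f_1) = d_1 \neq 0$, an identity forces the degree match $p_i d_1 = q_i d_i$ for every $i$, i.e.\ $r_i / r_1 = d_i / d_1$. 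Thus there is essentially a \emph{unique} structurally bad direction $[\overline r^\ast] := [(d_1, \dots, d_n)] \in \mathbb{P}^{n-1}(\Q)$; outside a proper algebraic subvariety of $\mathbb{P}^{n-1}$ (which also absorbs the locus where the leading homogeneous parts $h_j$ of the $f_j$ vanish) every rational direction admits only finitely many bad $\xi$, of bounded modulus.

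To produce a good solution, I would pick $\overline v \in U \cap (2\pi i \Z^\ast)^n$ whose projective class $[\overline v]$ lies outside this proper subvariety---possible by the assumed Zariski density---and run the Masser-type construction in the proof of Theorem~\ref{algebraic} with base point $\overline v$ and parameter $t$ taken large. This yields a solution $\overline a$ with $|\overline a - t\overline v| = O(\log t)$. If $\overline a = \xi \overline r$ were bad, proportionality between $\xi \overline r$ and $t \overline v$ (up to a perturbation negligible compared to $t$) forces $[\overline r] = [\overline v]$, which by construction is not a structurally bad direction; but then $\xi$ must belong to a finite set of moduli bounded by a constant depending only on $\overline v$, while $|\xi| \sim t|\overline v|/|\overline r|$ is unbounded in $t$. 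For $t$ large enough this is a contradiction, so $\overline a$ is not bad and $\td_\Q(\overline a) \geq 2$.

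The main obstacle will be making the second step fully rigorous: the degree-matching argument has to treat the rational directions where the leading homogeneous parts $h_j$ of $f_j$ degenerate (a codimension-one condition, but with infinitely many rational points) and confirm these only contribute a proper algebraic subvariety to the ``bad'' set; and one must extract from the proof of Theorem~\ref{algebraic} a localized, quantitative version---that for each prescribed base direction $\overline v$ and each sufficiently large $t$ there is a solution of the system lying within $O(\log t)$ of $t \overline v$---rather than invoking only its purely existential statement.
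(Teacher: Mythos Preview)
Your overall strategy shares its core with the paper's: reduce to showing the $\Q$-linear span of $\overline a$ has dimension at least $2$, and identify via a degree-matching argument that the only ``structurally bad'' rational direction is $[\overline d] = [(d_1,\dots,d_n)]$. But the way you propose to exclude that case differs from the paper, and your version has a genuine gap.

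The problematic step is the claim in your third paragraph that $|\overline a - t\overline v| = O(\log t)$ together with $\overline a = \xi\,\overline r$ ``forces $[\overline r] = [\overline v]$''. It does not: for any fixed $t$, there are infinitely many primitive $\overline r\in\Z^n$ and scalars $\xi$ with $|\xi\overline r - t\overline v|\le C\log t$ (just perturb the integer direction slightly and rescale). So you cannot pin down $[\overline r]$ from the estimate, and the fallback---``for non-structurally-bad $\overline r$ the bad $\xi$ are of bounded modulus''---is not uniform in $\overline r$: as $[\overline r]$ approaches the vanishing locus of some $h_j$ or the direction $[\overline d]$, the bound on $|\xi|$ blows up. Hence the size contradiction at the end does not go through, and the obstacles you flag in the last paragraph are in fact fatal for this route as written.

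The paper avoids all quantitative control by a much simpler device: it \emph{encodes} the needed inequalities as extra exponential equations. Concretely, it augments the system by new variables and the equations $e^{u_0}=d_1x_2-d_2x_1$ and $e^{u_j}=h_j(\overline x)$ for $j=1,\dots,n$; any solution then automatically satisfies $h_j(\overline a)\neq 0$ and $d_1a_2\neq d_2a_1$. One then argues by contradiction: if $\td_\Q(\overline a)=1$, Schanuel gives $\dim_\Q\langle\overline a\rangle=1$, so $\overline a$ is a \emph{generic} point (over $\Q$) of a rational line $L$. The relation $\hat f(\overline a)^{\overline m_j}=1$ then propagates to all of $L$, and letting the line parameter go to infinity (using $h_i(\overline a)\neq 0$ so the leading terms dominate) gives $\overline m_j\cdot\overline d=0$ for all $j$, i.e.\ $\overline d\in L$, forcing $\overline a\parallel\overline d$---contradicting $d_1a_2\neq d_2a_1$. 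So the degree-matching you discovered is exactly the paper's mechanism, but it is applied \emph{a posteriori} to the line $L$ spanned by $\overline a$, with the bad cases ruled out in advance by the augmented Masser system, rather than via a localization and size argument.
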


\begin{proof}
For every $i \leq n$, let $d_i = \deg( f_i)$, 
then,
\[
f_i = h_i + \epsilon_i  
\]
for a unique homogeneous algebraic function $h_i$ of degree~$d_i$ and $\deg(\epsilon_i)<d_i.$
Consider the system 

\be\left\{
\begin{aligned}\label{eq:masser-only-generic}
e^{x_1} &= f_1(\overline{x})\\
\dotsc\\
e^{x_n} &= f_n(\overline{x})\\
h_1(\overline{x}) &\neq 0 \\
\dotsc\\
h_n(\overline{x}) & \neq 0\\
d_1 x_2 - d_2 x_1 & \neq 0.
\end{aligned}
\right.
\ee

\noindent which can be easily reduced to a Masser's system. Let $\overline{a}$ be a solution of system (\ref{sistem}).
 We now prove that  $\td_{\Bbb Q}(\overline{a}) \geq 2$. Assume, by a contradiction, that $t.d._{\Bbb Q}(\overline{a}) \leq 1$. By Lindemann-Weierstrass
Theorem, necessarily we have  $t.d._{\Bbb Q}(\overline{a}) = 1$, and by Schanuel's Conjecture, $\overline{a}$ has $\Q$-linear
dimension~$1$.
Thus, there exist $\overline{m}_1, \dotsc, \overline{m}_{n-1} \in \Z^n$ which are $\Q$-linearly
independent, and such that
\[
\overline{m}_j \cdot \overline{a} = 0, \quad j=1 \dotsc n-1.
\]
We have
\[
\hat f(\overline{a})^{\overline{m}_j}  = f_1(\overline a)^{m_{j_{1}}} \cdot \ldots \cdot f_n(\overline a)^{m_{j_{n}}} = e^{\overline{m}_j\cdot \overline{a}}= 1
, \quad j=1 \ldots n-1.
\]
Let
\[
L = \set{\overline{z} \in \C^n: \bigwedge_{j=1}^{n-1} \overline{m}_j \cdot \overline{z} = 0}.
\]
Clearly, $L$ is a $\C$-linear space of dimension $1$, and 
$\overline{a} \in L$. Thus, $L$ is the $\C$-linear span of~$\overline{a}$.
Moreover, since $t.d._{\Bbb Q}(\overline{a}) = 1$,  
for every $t \in \C$ such that
$f_i(t \overline a) \neq 1$, for every $i \leq n$, and we have
\[
\hat f(t \overline a)^{\overline{m}_j} = 1, \quad j = 1 \dotsc n - 1.
\]
For $t \in \R,$ $t >> 1$,
since $h_i(\overline a) \neq 0$ for every~$i$, 
we obtain
\[
\overline{m}_j\cdot \overline{d} = 0, \quad j=1 \dotsc n-1
\]
where $\overline d = (d_1, \dotsc, d_n)$.
Thus, $\overline d \in L.$ Since $L$ has $\C$-linear dimension~$1$, 
we have $\overline a = \lambda \overline d$ for some $\lambda \in \C$,
contradicting our choice $d_1 a_2 \neq d_2 a_1$.
\end{proof}

Clearly Lemma~\ref{gradotras} implies the following: 

\begin{corollario}\label{gradotras1}
(SC) Let $n \geq 2$. Let $p_1(\overline x), \dotsc, p_n(\overline x) \in \Bbb Q^{alg}[\overline x]$ be nonconstant polynomials in $\overline x = (x_1, \dotsc, x_n)$. 
Then, the system
\be
\left\{
\begin{aligned}\label{eq:masser-generic}
e^{x_1} &= p_1(\overline x)\\
\dotsc\\
e^{x_n} &= p_n(\overline x)
\end{aligned}
\right.
\ee
has a solution $\overline a$ such that $t.d._{\Bbb Q}(\overline a) \geq 2$.
In particular, if $n = 2$, then \eqref{eq:masser-generic} has a generic solution.
\end{corollario}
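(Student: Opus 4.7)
The plan is to deduce this statement as an essentially immediate consequence of Lemma~\ref{gradotras} by taking $f_i := p_i$ for $i = 1, \dots, n$. What remains is to verify the hypotheses of the lemma and then, in the case $n = 2$, to upgrade the resulting transcendence bound to full genericity.

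I would take the cone $U := \C^n$ itself, which is trivially a cone (an open subset of $\C^n$ closed under positive scaling) and for which $U \cap (2\pi i \Z^*)^n = (2\pi i \Z^*)^n$ is Zariski dense in $\C^n$. Each $p_i$ is a nonzero polynomial with coefficients in $\Q^{alg}$ and satisfies the monic equation $u - p_i(\bar x) = 0$; since $\Q^{alg}(\bar x)$ is algebraic over $\Q(\bar x)$, this exhibits $p_i$ as a nonzero algebraic function over $\Q(\bar x)$ on $U$. By the Fact recalled just before Theorem~\ref{algebraic}, the degree of $p_1$ as an algebraic function equals its total degree as a polynomial, which is $\geq 1$ because $p_1$ is nonconstant; in particular $\deg(f_1) \neq 0$. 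All hypotheses of Lemma~\ref{gradotras} are thus met, and the lemma produces a solution $\bar a \in \C^n$ of \eqref{eq:masser-generic} with $\td_\Q(\bar a) \geq 2$.

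For the case $n = 2$, I would argue that this already gives a generic solution. The algebraic set
\[
V := \set{(\bar x, \bar y) \in G_2(\C) : y_1 = p_1(\bar x),\ y_2 = p_2(\bar x)}
\]
is the graph of the morphism $\bar x \mapsto (p_1(\bar x), p_2(\bar x))$, hence is isomorphic to $\C^2$ and has dimension~$2$. Because $(\bar a, e^{\bar a})$ lies in $V$, we automatically have $\td_\Q(\bar a, e^{\bar a}) \leq 2$; combined with the inequality $\td_\Q(\bar a) \geq 2$ from the previous step this forces equality, so $(\bar a, e^{\bar a})$ is a generic point of $V$.

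Since Lemma~\ref{gradotras} does all of the analytic and Schanuel-theoretic heavy lifting, no real obstacle remains at this stage; the only point that deserves any care is checking that ``algebraic over $\Q(\bar x)$'' is not more restrictive than ``polynomial with coefficients in $\Q^{alg}$'', which is immediate from the algebraicity of $\Q^{alg}/\Q$. The genuinely substantive work has already been performed upstream, namely in the combination of Masser's theorem (Theorem~\ref{algebraic}) and the $\Q$-linear-dependence analysis carried out inside the proof of Lemma~\ref{gradotras}.
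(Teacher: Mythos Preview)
Your proposal is correct and matches the paper's approach exactly: the paper simply states ``Clearly Lemma~\ref{gradotras} implies the following'' and gives no further argument, so your verification of the hypotheses and the $n=2$ dimension count is precisely the intended (and only) route.
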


\begin{osserva}
The hypothesis in Lemma \ref{gradotras} and Corollary \ref{gradotras1} are minimal in order to ensure that $t.d._{\Bbb Q}(\overline a) \not = 0,1.$  \end{osserva}

Adding some extra hypothesis we strength Corollary \ref{gradotras1} as follows.


\begin{lemma}
(SC) Let $p_1(\overline x), \dotsc, p_n(\overline x) \in \Q^{alg}[x_1, \dotsc, x_n]$.
Let $c_i = p_i(\overline 0)$.
Assume that the $c_i$ are nonzero and multiplicatively independent
(\ie, for every $\overline 0 \neq \overline m \in \Z^n$, \mbox{${ \hat c}^{\,\overline m} \neq 1)$}.
Then, all solutions of  the system
\be\label{eq:mult}
\left\{\begin{aligned}
e^{x_1} &= p_1(\overline x)\\
\dots\\
e^{x_n} &= p_n(\overline x)
\end{aligned}\right.
\ee
are generic.
\end{lemma}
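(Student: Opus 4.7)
The plan is to proceed by induction on $n$, converting genericity into a $\Q$-linear independence statement via Schanuel's Conjecture. Let $\overline a$ be a solution. Since $e^{a_i} = p_i(\overline a)$ is algebraic over $\overline a$, I observe that $\td_\Q(\overline a, e^{\overline a}) = \td_\Q(\overline a)$, so $\overline a$ is generic iff $\td_\Q(\overline a) = n$. By (SC) this is equivalent to $a_1, \dotsc, a_n$ being $\Q$-linearly independent. I will suppose for contradiction that $\overline m \cdot \overline a = 0$ for some primitive $\overline 0 \neq \overline m \in \Z^n$; exponentiating yields $\prod_i p_i(\overline a)^{m_i} = 1$, and the goal is to combine this with the multiplicative independence of $\overline c$ to reach a contradiction.

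For the base case $n = 1$, the hypothesis forces $c = p(0)$ to not be a root of unity; if a solution $a$ were non-generic, then $a \in \Qb$, so $e^a = p(a) \in \Qb$, and Lindemann--Weierstrass forces $a = 0$, whence $c = 1$, contradicting the hypothesis. For the inductive step, after permuting so that $m_n \neq 0$, I substitute $x_n = -\tfrac{1}{m_n}\sum_{j<n} m_j x_j$ into each $p_i$, obtaining polynomials $\tilde p_i(\overline x') \in \Q^{alg}[\overline x']$ with $\tilde p_i(\overline 0) = c_i$, where $\overline x' = (x_1, \dotsc, x_{n-1})$. Then $\overline a' := (a_1, \dotsc, a_{n-1})$ solves the reduced system $e^{x_i} = \tilde p_i(\overline x')$ for $i < n$, and the sub-tuple $(c_1, \dotsc, c_{n-1})$ inherits multiplicative independence from $\overline c$; by the inductive hypothesis, $\overline a'$ is generic, so $a_1, \dotsc, a_{n-1}$ are algebraically independent over $\Q$.

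The contradiction then comes from the extra algebraic constraint imposed by the linear relation. From $\overline m \cdot \overline a = 0$ and $e^{a_i} = \tilde p_i(\overline a')$ I get $\prod_i \tilde p_i(\overline a')^{m_i} = 1$; after clearing denominators, the polynomial
\[
\tilde Q(\overline x') := \prod_{m_i>0} \tilde p_i(\overline x')^{m_i} - \prod_{m_i<0} \tilde p_i(\overline x')^{-m_i}
\]
vanishes at $\overline a'$. Evaluating at $\overline 0$ gives $\bigl(\prod_{m_i<0} c_i^{-m_i}\bigr)\bigl(\prod_i c_i^{m_i} - 1\bigr)$, which is nonzero by the multiplicative independence hypothesis, so $\tilde Q \not\equiv 0$; this contradicts the algebraic independence of $a_1, \dotsc, a_{n-1}$.

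The main obstacle I anticipate is handling the substitution step cleanly: one must verify that the $\tilde p_i$ are polynomials over $\Q^{alg}$ (immediate, since the linear substitution has algebraic coefficients) and that $(c_1, \dotsc, c_{n-1})$ still satisfies the lemma's hypothesis (immediate, as multiplicative independence passes to sub-tuples). The conceptual heart of the argument is that multiplicative independence of $\overline c$ is precisely what ensures $\tilde Q \not\equiv 0$—the hypothesis is used exactly once, at the critical point where non-triviality of the obstruction polynomial is established.
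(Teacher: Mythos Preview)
Your proof is correct, but it follows a genuinely different route from the paper's. The paper gives a direct, one-shot argument: assuming $\td_\Q(\overline a)=n-k<n$, Schanuel's Conjecture yields $k$ independent relations $\overline m_j\cdot\overline a=0$, so $\overline a$ lies in the $\Q$-linear subspace $L=\bigcap_j(\overline m_j)^\perp$ of dimension $n-k$; since $\td_\Q(\overline a)=\dim L$, the point $\overline a$ is \emph{generic in $L$}, and therefore the polynomial identity $\hat p(\overline x)^{\overline m_j}=1$ (which holds at $\overline a$) must hold identically on $L$, in particular at $\overline 0$, giving $\hat c^{\,\overline m_j}=1$ and the desired contradiction. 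Your argument instead unrolls this inductively: you pick off one linear relation, parametrize the hyperplane, invoke the inductive hypothesis to obtain algebraic independence of $a_1,\dotsc,a_{n-1}$, and then exhibit the explicit nonzero polynomial $\tilde Q$ that they satisfy. Conceptually both proofs hinge on the same mechanism---restricting the multiplicative relation $\prod p_i^{m_i}=1$ to the rational linear span of $\overline a$ and evaluating at $\overline 0$---but the paper's phrasing via ``generic point of $L$'' handles all the linear relations simultaneously and avoids induction, while yours is more elementary in that it never appeals to the notion of a generic point of a variety, working entirely with algebraic independence of coordinates.
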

\begin{proof}
Let $\overline a \in \C^n$ be a solution of \eqref{eq:mult} and let $k = n - t.d._{\Q}(\overline a).$
Assume, by contradiction, that $k > 0$.
By (SC), there exist $\overline m_1, \dotsc, \overline m_k \in \Z^n$ linearly independent, such that
$\overline m_1 \cdot \overline a = \dots = \overline m_k  \cdot \overline a = 0$.
Thus, $(e^{\overline a})^{\overline m_1} = \dots = (e^{\overline a})^{\overline m_k} = 1$, and therefore
$\hat p(\overline a)^{\overline m_1} = \dots = \hat p(\overline a)^{\overline m_k} = 1,$ where $\hat p(\overline a)^{\overline m_j} = p_1(\overline a)^{m_{j_{1}}} \cdot \ldots \cdot p_n(\overline a)^{m_{j_{n}}},$ for $j = 1, \ldots, k.$ 
Let $L = (\overline m_1)^\perp \cap \dots \cap (\overline m_k)^\perp$.
Thus, $L$ is a linear space of dimension $n-k$ defined over $\Q$.
Since $\overline a \in L$ and $\td_{\Q}(\overline a) = \dim(L)$, we have that $a$ is a generic point of $L$. 
Thus,
$\hat p(\overline x)^{\overline m_j} = 1$ on all $L$, for $j = 1, \dotsc, k$.
In particular, $ \hat c^{\overline m_j} = \hat p(\overline 0)^{\overline m_j} = 1$, contradicting the assumption that the $c_i$'s are multiplicatively independent.
\end{proof}

Now we are able to prove the following result.

\begin{prop}\label{gradotras2}
 (SC) There is a solution $a \in \Bbb C$ of  (\ref{eq:n-exp}) such that $$t.d._{\Bbb Q}(a, e_1(a), \ldots, e_k(a))  \not= 0, 1, k-1.$$
\end{prop}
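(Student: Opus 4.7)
\noindent The plan is to recast the equation $p(z, e_1(z), \ldots, e_k(z)) = 0$ as an algebraic-function Masser system in the spirit of Theorem \ref{algebraic}, and then to excise the three bad transcendence degrees by combining Lemma \ref{gradotras} with a finiteness argument modelled on the bad-$r$ analysis of Theorem \ref{lem:generic-exp2}.

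\smallskip
\noindent \textbf{Setup.} Pick an algebraic function $\phi(x_0, \dotsc, x_{k-1})$, defined on some cone $U$, solving $p(\bar x, \phi(\bar x)) = 0$ in the last variable of $p$. Solutions $a \in \C$ of \eqref{eq:n-exp} correspond bijectively (via $a_i := e_i(a)$) to tuples $\bar a = (a_0, \dotsc, a_{k-1}) \in \C^k$ satisfying the tower
\[
e^{a_i} = a_{i+1} \quad (i = 0, \dotsc, k-2), \qquad e^{a_{k-1}} = \phi(\bar a).
\]

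\smallskip
\noindent \textbf{Step 1 (excluding $\td = 0,1$).} The first defining function $f_1(\bar y) = y_1$ has degree $1 \neq 0$, so Lemma \ref{gradotras} applies to the above system and produces a solution $\bar a$ with $\td_{\Q}(\bar a) \geq 2$; hence $\td_{\Q}(a, e_1(a), \dotsc, e_k(a)) \geq 2$.

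\smallskip
\noindent \textbf{Step 2 (structure of bad relations for $\td = k-1$).} Suppose, for contradiction, that $\td_{\Q}(\bar a) = k-1$. By (SC), the $\Q$-linear dimension of $\bar a$ is at most $k-1$, yielding a primitive $\bar m = (m_0, \dotsc, m_{k-1}) \in \Z^k\setminus\{0\}$ with $\bar m \cdot \bar a = 0$. Exponentiating gives
\[
\prod_{i=0}^{k-1} a_{i+1}^{m_i} \;=\; 1, \qquad\text{with } a_k := \phi(\bar a).
\]
Repeating the scaling/transcendence argument of Lemma \ref{gradotras}, the identity $\hat f(\bar z)^{\bar m} = 1$ propagates to the $\Qb$-Zariski closure $W$ of $\bar a$, which has dimension $k-1 \geq 1$; letting $\bar z \to \infty$ in $W$ and comparing leading homogeneous terms of $\hat f^{\bar m}$ forces $\bar m \cdot \bar d = 0$, where $\bar d = (1, 1, \dotsc, 1, \deg \phi)$ is the degree vector of the system.

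\smallskip
\noindent \textbf{Step 3 (finiteness and Masser augmentation).} Mimicking the bad-$r$ analysis of Theorem \ref{lem:generic-exp2}, one then argues that only finitely many primitive $\bar m \in \Z^k$ with $\bar m \cdot \bar d = 0$ can occur as genuine linear relations for an exponential-tower solution of $p$. Label those bad directions $\bar m_1, \dotsc, \bar m_N$ and enlarge the Masser system by fresh variables $u_1, \dotsc, u_N$ and equations
\[
e^{u_j} \;=\; \prod_{i=0}^{k-1} y_{i+1}^{m_{j,i}^+} \;-\; \prod_{i=0}^{k-1} y_{i+1}^{m_{j,i}^-} \qquad (j = 1, \dotsc, N),
\]
(with $y_k := \phi(\bar y)$ and $m^{\pm}$ the positive/negative parts), which encode the inequalities $\hat f(\bar a)^{\bar m_j} \neq 1$. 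Theorem \ref{algebraic} applied to the enlarged system delivers a solution $\bar a$ realizing none of the bad $\bar m_j$; combined with Step 1 this yields $\td_{\Q}(a, e_1(a), \dotsc, e_k(a)) \notin \{0,1,k-1\}$.

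\smallskip
\noindent \textbf{Main obstacle.} The crux is the finiteness claim in Step 3. For $k=3$ it followed at once from the irreducibility of the binomial $z^m - r^n x^n$ (Schmidt's criterion) and the containment $q \mid p_r$. Here one must establish a multivariable analogue: the cleared monomial relation $\prod y_{i+1}^{m_i^+} - \prod y_{i+1}^{m_i^-}$, reduced modulo the irreducible $p$, must produce a polynomial identity whose support bounds $|\bar m|$ in terms of $\deg p$, exploiting both the orthogonality $\bar m \cdot \bar d = 0$ and the hypothesis that $p$ depends on $x_0$ and $x_k$.
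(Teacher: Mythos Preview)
Your Step~1 and the overall architecture (reduce to a Masser-type system, identify the ``bad'' linear relations forced by $\td = k-1$, augment the system to exclude them) agree with the paper. The genuine gap is the finiteness claim in Step~3, which you yourself flag as the main obstacle but do not close.

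The orthogonality $\bar m \cdot \bar d = 0$ that you extract in Step~2 is correct but does not help: it cuts the lattice of candidate relations down to a sublattice of rank $k-1$, which is still infinite. Your proposed route in the final paragraph --- reduce the binomial $\prod y_{i+1}^{m_i^+} - \prod y_{i+1}^{m_i^-}$ modulo the irreducible $p$ --- does not work as stated, because that monomial identity does \emph{not} hold modulo $p$ alone; it holds only on the hyperplane $H_{\bar m}=\{\bar m\cdot\bar x=0\}$. Reducing modulo $p$ therefore produces no constraint on~$|\bar m|$.

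The paper's key move, which you are missing, is to first observe that $m_{k-1}\neq 0$ (indeed, if $m_{k-1}=0$ then $\prod_{i<k-1} x_{i+1}^{m_i}=1$ identically on the hyperplane $H_{\bar m}$, forcing $\bar m=0$), and then to \emph{substitute} the hyperplane relation $x_{k-1}=\tilde r\cdot\tilde x$ (with $\tilde r=\tilde m/m_{k-1}$, $\tilde x=(x_0,\dots,x_{k-2})$) into $p$, obtaining
\[
g_{\tilde r}(\tilde x,z)\;=\;p(\tilde x,\ \tilde r\cdot\tilde x,\ z)\in\Qb[\tilde x,z].
\]
Exponentiating the linear relation yields an irreducible binomial $s_{\tilde m}(\tilde x,z)$ of the shape $z^{m_{k-1}}\prod_{m_i<0}x_i^{-m_i}-\prod_{m_j>0}x_j^{m_j}$. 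The point $(\tilde a,e_k(a))$ has transcendence degree $k-1$, hence is generic in the $(k-1)$-dimensional hypersurface $V(s_{\tilde m})$; since it also lies on $V(g_{\tilde r})$, one gets $s_{\tilde m}\mid g_{\tilde r}$. Now $\deg g_{\tilde r}\le\deg p$ while $\deg s_{\tilde m}\ge\max_i|m_i|$, so $\max_i|m_i|\le\deg p$ and only finitely many primitive $\bar m$ survive. (The case $g_{\tilde r}\equiv 0$ is impossible: it would force $p$ to be the linear form $x_{k-1}-\tilde r\cdot\tilde x$, contradicting the dependence of $p$ on~$x_k$.)

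Finally, once finiteness is established, the paper augments the system by the \emph{linear} inequalities $x_{k-1}-\tilde r_j\cdot\tilde x\neq 0$ (encoded as $e^{u_j}=x_{k-1}-\tilde r_j\cdot\tilde x$), exactly as in \eqref{eq:2-exp-generic}; this is simpler than your multiplicative encoding and avoids having to check that the difference of monomials is a nonzero algebraic function on the relevant cone.
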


\begin{proof} 
As in the previous cases, $\td (a, e_1(a), \dotsc, e_k(a))\not=0$ because of Lindemann-Weierstrass Theorem.\\
In order to prove that $\td _{\Bbb Q}(a, e_1(a), \dotsc, e_k(a))\not=1$ it is enough to apply Lemma \ref{gradotras}.\\
Assume now that $\td(a, e_1(a), \dotsc, e_k(a)) =k-1$. By (SC) there exists a $k$-tuple $\overline{0} \neq (m_0, \dotsc, m_{k-1}) \in \Z^{k}$ (and without loss of generality we can assume $m_{k-1}\not=0$) such that 

$$m_{k-1}e_{k-1}(a) = \sum_{i = 0}^{k-2} m_i e_i(a) = \tilde{m} \cdot \at$$ 
where $\tilde{m}=(m_0, \dotsc, m_{k-2}) \in \Z^{k-1}$ and  $\at = (a, e_1(a), \dotsc, e_{k-2}(a))$.  Then the following relations hold

\begin{enumerate}
\item 
$e_{k-1}(a) = \sum_{i = 0}^{k-2} \frac{m_i }{m_{k-1}}e_i(a) = \frac{\tilde{m}}{m_{k-1}} \cdot \at$

\item
$ e_{k}(a)^{m_{k-1}} =  e_1(a)^{m_0} e_2(a)^{m_1} \ldots   e_{k-1}(a)^{m_{k-2}}$.
\end{enumerate}

Let  $\tilde{r}=(r_0, \ldots , r_{k-2})=( \frac{m_0}{m_{k-1}}, \ldots , \frac{m_{k-2}}{m_{k-1}})$, and $\xt = (x_0, \dotsc, x_{k-2})$. 
Let $I_1$, $I_2$ be the partition of $\{  1, \ldots , k-2\}$ induced by $\tilde{m} $, i.e. $I_1$ is the set of those indices $i$ corresponding to negative $m_i$'s, and $I_2$ is the set of those indices $j$ corresponding to positive $m_j$'s. 
Define the following two polynomials

\

$g_{\tilde{r}}(\tilde{x},z)= p(\tilde{x},\tilde{r}\cdot  \tilde{x}, z)$, 

\

$s_{\tilde{m}}(\tilde{x},z)=z^{m_{k-1}} \prod_{i\in I_1}x_i^{-m_i}- \prod_{j\in I_2}x_j^{m_j}$

\

For convenience we consider the polynomial $s_{\tilde{m}}(\tilde{x},z)$ also in the variable $x_0$ even if this variable does not appear. We notice that the polynomial $g_{\tilde{r}}$ may be reducible, while  $s_{\tilde{m}}$ is irreducible (see \cite{Schmidt}).

\
We call a tuple $(\frac{m_0}{m}, \ldots ,\frac{m_{k-2}}{m})\in \mathbb Q$ {\it bad} if there exists $a\in \mathbb C$ solution of (\ref{eq:n-exp}) such  that 
$$e_{k-1}(a) = \sum_{i = 0}^{k-2}\frac{m_i}{m} e_i(a).$$ 

\

Notice that
$(\at, e_{k}(a))$ is a  solution of  both $g_{\tilde{r}}(\tilde{x},z)=0$ and $s_{\tilde{m}}(\tilde{x},z)=0$, and it is generic for $g_{\tilde{r}}(\tilde{x},z)=0$. Hence, $s_{\tilde{m}}$ divides $g_{\tilde{r}},$ and as in Theorem \ref{lem:generic-exp2} there are only finitely many bad tuples of such rationals.

Arguing as in Theorem~\ref{lem:generic-exp2} we consider a new system as (\ref{eq:2-exp-generic}) which has a solution that is a generic solution for (\ref{eq:n-exp}).
\end{proof}

\smallskip

\begin{corollario}
 (SC) Let $f(z) = p(z, e^z, e^{e^z}, e^{e^{e^{z}}})$, where $p(x,y,z,w)\in \mathbb Q^{alg}[x,y,z,w]$ dependes on the last variable. Then there is $a \in \Bbb C$ which is generic solution for $f(z) = 0$.
\end{corollario}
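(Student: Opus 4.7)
The plan is to recognize that this corollary is an almost immediate consequence of Proposition~\ref{gradotras2} applied with $k = 3$. First I would check the easy upper bound: since $p(x,y,z,w)$ genuinely depends on the last variable $w$, the equation $p(a, e^a, e^{e^a}, e^{e^{e^a}}) = 0$ exhibits $e^{e^{e^a}}$ as algebraic over $\Q(a, e^a, e^{e^a})$, so any solution $a \in \C$ of $f(z)=0$ automatically satisfies
\[
\td_{\Q}(a, e^a, e^{e^a}, e^{e^{e^a}}) \leq 3.
\]
Genericity in the sense of the definition at the start of Section~4 is therefore the condition $\td_{\Q}(a, e^a, e^{e^a}, e^{e^{e^a}}) = 3$.

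Next I would invoke Proposition~\ref{gradotras2} with $k = 3$: under (SC) there is a solution $a \in \C$ of $f(z) = 0$ with
\[
\td_{\Q}(a, e^a, e^{e^a}, e^{e^{e^a}}) \notin \{0, 1, k-1\} = \{0,1,2\}.
\]
Combined with the upper bound $\td \leq 3$, the only value left is $\td = 3$, which gives a generic solution.

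So the entire proof is two lines: apply Proposition~\ref{gradotras2} to get $\td \ge 3$, and note that the polynomial relation forces $\td \le 3$, yielding $\td = 3$. There is no real obstacle; the substance of the argument has already been absorbed into Proposition~\ref{gradotras2} (whose hard cases $\td = 1$ and $\td = k-1 = 2$ are handled, respectively, by Lemma~\ref{gradotras} and by the bad-rational-tuple analysis modeled on Theorem~\ref{lem:generic-exp2}). The only point worth spelling out is the verification that $p$ depending on $w$ does give the upper bound $\td \le 3$; this uses irreducibility (implicit in the standing assumptions of the section) to ensure $p(a, e^a, e^{e^a}, w)$ is a nonzero polynomial in $w$ at the chosen $a$, so that $e^{e^{e^a}}$ is indeed algebraic over $\Q(a, e^a, e^{e^a})$.
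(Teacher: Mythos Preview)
Your proposal is correct and is exactly the argument the paper intends: the corollary is stated immediately after Proposition~\ref{gradotras2} with no separate proof, and specializing that proposition to $k=3$ rules out $\td \in \{0,1,2\}$, while the hypersurface $p=0$ (defined over $\Q^{alg}$, dimension~$3$) forces $\td \le 3$. One small cleanup: your justification of the upper bound via ``$p(a,e^a,e^{e^a},w)$ nonzero in $w$'' is not guaranteed by irreducibility alone, but the bound $\td \le 3$ follows more directly from $\dim V(p)=3$, or by noting that if $\td_\Q(a,e^a,e^{e^a})=3$ the leading $w$-coefficient cannot vanish, and otherwise the bound is trivial.
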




{\bf Acknowledgements.} This research is part of  the project FIRB 2010, {\it Nuovi sviluppi nella Teoria dei Modelli dell'esponenziazione}. The authors thank   
A.~Macintyre, V.~Mantova and D.~Masser for many helpful discussions.


\begin{thebibliography}{99}

\bibitem{ax} J. Ax: \emph{On Schanuel's Conjecture}, Ann. of Math., (2), 93, (1971), 252-268.

\bibitem{BM} W.D. Brownawell and D.W. Masser, Zero estimates of moving targets, preprint (2016).

\bibitem{dmt2} P. D'Aquino, A. Macintyre and G. Terzo: \emph{From Schanuel's conjecture to Shapiro's conjecture}. Comment. Math. Helv., (89), 3,  (2014), 597-616.
\bibitem{dmt} P. D'Aquino, A. Macintyre and G. Terzo: \emph{Schanuel Nullstellensatz for Zilber fields}, Fundamenta Mathematicae, 207, (2010), 123--143. 

\bibitem{dmt1} P. D'Aquino, A. Macintyre and G. Terzo: \emph{Comparing $\C$ and Zilber's exponential fields: zero sets of exponential polynomials},
Journal of the Institute of Mathematics of Jussieu, (1), 15, (2016), 71--84

\bibitem{van} L. van den Dries: \emph{Exponential rings, exponential
polynomials and exponential functions}, Pacific Journal of
Mathematics, (1) 113, (1984), 51--66.

\bibitem{kantorovich}  J. E. Dennis, Jr., Robert B. Schnabel: \emph{Numerical Methods for Unconstrained Optimization and Nonlinear Equations}, Classics in Applied Mathematics, (1996).

\bibitem{ayan} A. G$\ddot{u}$naydin : \emph{Rational solutions of polynomial-exponential equations.} 
Int. J. Number Theory, (6), 8, (2012), 1391--1399. 



\bibitem{hen} C. W. Henson and L. A. Rubel: \emph{Some applications of Nevanlinna Theory to Mathematical Logic:
identities of exponential functions}, in Transactions of the
American Mathematical Society, (1), 282, (1984), 1-32.



\bibitem{udi} E. Hrushovski:  \emph{Strongly minimal expansions of algebraically closed
fields}, Israel J. Math., (2-3) 79, (1992), 129-151.


\bibitem{katz} H. Katzberg: \emph{Complex exponential terms with only finitely many zeros}, Seminarberichte, Humboldt-Univ. Berlin, Sekt. Math. 49, (1983), 68--72 . 

\bibitem{kirby} J. Kirby \emph{Finitely presented exponential fields}, Algebra and Number Theory, (4), 7, (2013), 943--980, 

\bibitem{kmo} J. Kirby, A. Macintyre, A. Onshuus: \emph{The algebraic numbers definable in various exponential fields.} Journal of the Institute of Mathematics of Jussieu, (4), 11 (2012), 825--834.

\bibitem{lang} S. Lang: \emph{Complex Analysis}, Springer (1993).


\bibitem{angusfree} A. Macintyre: \emph{Schanuel's conjecture and free exponential rings.} Annals Pure Applied Logic (3), 51, (1991), 241--246. 


\bibitem{MacWilkie} A. Macintyre and A. Wilkie: \emph{On the decidability of the real exponential field}. Kreiseliana, A K Peters, Wellesley, MA, (1996), 441--467. 


\bibitem{mantova} V. Mantova: \emph{Generic solutions of polynomial exponential equations}, submitted.

\bibitem{marker}D. Marker: \emph{A remark on Zilber's pseudoexponentiation}, The Journal of Symbolic Logic, (3) 71, (2006), 791-798.


\bibitem{masser} D. Masser: \emph{Notes}, Manuscript.

\bibitem{Schmidt}W. M. Schmidt, Equations over Finite Fields, An Elementary Approach, Lecture Notes in Mathematics 536, Springer (1976).

\bibitem{GiusyEuler} G. Terzo: \emph{Some Consequences of Schanuel's Conjecture in exponential
rings}, Communications of Algebra, (3), 36, (2008), 1171--1189



\bibitem{zilber} B. Zilber: \emph{Pseudo-exponentiation on
algebraically closed fields of characteristic zero}, Annals of
Pure and Applied Logic, (1) 132, (2004), 67-95.
\end{thebibliography}
\end{document}